\newtheorem{assumption}{Assumption}
\def\qed{ \ \vrule width.2cm height.2cm depth0cm\smallskip}
\newenvironment{proof}{\noindent {\bf Proof.\/}}{$\qed$\vskip 0.1in}
\newcommand{\we}{\wedge}
\newcommand{\ol}{\overline}
\newcommand{\ul}{\underline}
\newcommand{\ba}{\begin{array}}
\newcommand{\ea}{\end{array}}
\newcommand{\be}{\begin{equation}}
\newcommand{\ee}{\end{equation}}
\newcommand{\bea}{\begin{eqnarray}}
\newcommand{\eea}{\end{eqnarray}}
\newcommand{\beaa}{\begin{eqnarray*}}
\newcommand{\eeaa}{\end{eqnarray*}}
\def\dbE{\mathbb{E}}
\def\dbF{\mathbb{F}}
\def\dbI{\mathbb{I}}
\def\dbL{\mathbb{L}}
\def\dbN{\mathbb{N}}
\def\dbP{\mathbb{P}}
\def\dbR{\mathbb{R}}
\def\dbS{\mathbb{S}}
\def\dbQ{\mathbb{Q}}
\def\a{\alpha}
\def\b{\beta}
\def\g{\gamma}
\def\d{\delta}
\def\e{\varepsilon}
\def\k{\kappa}
\def\l{\lambda}
\def\si{\sigma}
\def\t{\tau}
\def\th{\theta}
\def\o{\omega}
\def\Th{\Theta}
\def\F{\Phi}
\def\O{\Omega}
\def\cE{{\cal E}}
\def\cF{{\cal F}}
\def\cJ{{\cal J}}
\def\cL{{\cal L}}
\def\cP{{\cal P}}
\def\cT{{\cal T}}
\def\ch{\textsc{h}}
\def\no{\noindent}
\def\ms{\medskip}
\def\q{\quad}
\def\pa{\partial}
\def\cd{\cdot}
\def\cds{\cdots}
\def\qed{ \hfill \vrule width.25cm height.25cm depth0cm\smallskip}
\newcommand{\basa}{\begin{assumption}}
\newcommand{\easa}{\end{assumption}}
\newcommand{\bas}{\begin{assum}}
\newcommand{\eas}{\end{assum}}
\def\limsup{\mathop{\overline{\rm lim}}}
\def\liminf{\mathop{\underline{\rm lim}}}
\def\pa{\partial}
 \def\cd{\cdot}
\def\cds{\cdots}
\def\1{{\bf 1}}
\def\:{\!:\!}
\def \proof{{\noindent \bf Proof\quad}}
\begin{document}

\newtheorem{thm}{Theorem}[section]
\newtheorem{lem}[thm]{Lemma}
\newtheorem{cor}[thm]{Corollary}
\newtheorem{prop}[thm]{Proposition}
\newtheorem{rem}[thm]{Remark}
\newtheorem{eg}[thm]{Example}
\newtheorem{defn}[thm]{Definition}
\newtheorem{assum}[thm]{Assumption}

\renewcommand {\theequation}{\arabic{section}.\arabic{equation}}
\def\thesection{\arabic{section}}

\numberwithin{equation}{section}
\numberwithin{thm}{section}

\title{\bf Comparison of viscosity solutions of fully nonlinear degenerate parabolic Path-dependent PDEs}

\author{Zhenjie  {\sc Ren}\footnote{CMAP, Ecole Polytechnique Paris, ren@cmap.polytechnique.fr.}   \and
Nizar {\sc Touzi}\footnote{CMAP, Ecole Polytechnique Paris, nizar.touzi@polytechnique.edu. Research supported by ANR the Chair {\it Financial Risks} of the {\it Risk Foundation} sponsored by Soci\'et\'e G\'en\'erale, and the Chair {\it Finance and Sustainable Development} sponsored by EDF and Calyon. }
       \and Jianfeng {\sc Zhang}\footnote{University of Southern California, Department of Mathematics, jianfenz@usc.edu. Research supported in part by NSF grant DMS 1413717.}}

\maketitle

\begin{abstract}
We prove a comparison result for viscosity solutions of (possibly degenerate) parabolic fully nonlinear path-dependent PDEs. In contrast with the previous result in Ekren, Touzi \& Zhang \cite{ETZ-part2}, our conditions are easier to check and allow for the degenerate case, thus including first order path-dependent PDEs. Our argument follows the regularization method as introduced by Jensen, Lions \& Souganidis \cite{JLS} in the corresponding finite-dimensional PDE setting. The present argument significantly simplifies the comparison proof in \cite{ETZ-part2}, but requires an $\mathbb{L}^p-$type of continuity (with respect to the path) for the viscosity semi-solutions and for the nonlinearity defining the equation. 
\end{abstract}

\section{Introduction}

This paper provides a proof for the comparison result for viscosity solutions of the fully nonlinear path dependent partial differential equation:
\bea\label{PPDE-intro}
-\pa_t u(t,\o) - G\big(t,\o, u(t,\o),\pa_\o u(t,\o), \pa^2_{\o\o} u(t,\o)\big) \le  0
 &\mbox{on}&
 [0,T)\times\Omega.
 \eea
Here, $T>0$ is a given terminal time, and $\o\in \O$ is a continuous path from $[0,T]$ to $\dbR^d$ starting from the origin. The nonlinearity $G$ is a mapping from $ [0,T]\times\O\times\dbR\times\dbR^d\times\dbS^d$ to $\dbR$, where $\dbS^d$ denotes the set of all $d\times d$-symmetric matrices.
 
Such equations arise naturally in many applications. For instance, the dynamic programming equation (also called Hamilton-Jacobi-Bellman equation) associated to a problem of stochastic control of non-Markov diffusions falls in the class of equations \eqref{PPDE-intro}, see \cite{ETZ-part1}. In particular hereditary control problems may be addressed in this context rather than embedding the problem into a PDE on the Hilbert space $\dbL^2([0,T])\supset\Omega$, see \cite{CossoFedericoGozziRosestolatoTouzi}. Similarly, stochastic differential games with non-Markov controlled dynamics lead to path-dependent Isaac-Hamilton-Jacobi-Bellman equations of the form \eqref{PPDE-intro}, see \cite{PhamZhang}. The notion of nonlinear path dependent partial differential equation was first proposed by Peng \cite{Peng-ICM}. A crucial tool to study such equation is the functional It\^{o} calculus, initiated by Dupire \cite{Dupire}, and further studied by Cont \& Fournie \cite{CF}. We also refer to Peng \& Wang \cite{PW} for some study on classical solutions of semilinear equations. 
 
The notion of viscosity solutions studied in this paper was introduced by Ekren, Keller, Touzi \& Zhang \cite{EKTZ} in the semilinear context, and further extended to the fully nonlinear case by Ekren, Touzi \& Zhang \cite{ETZ-part1, ETZ-part2}. Following the lines of the classical Crandall \& Lions \cite{CrandallLions} notion of viscosity solutions, supersolutions and subsolutions are defined through tangent test functions. However, while Crandall \& Lions consider pointwise tangent functions, the tangency conditions in the present path-dependent setting is in the sense of the mean with respect to an appropriate class of probability measures $\cP$. In particular, when restricted to the Markovian case, our notion of viscosity solutions involves a larger set of test functions. This is in favor of uniqueness but may make the existence issue more difficult. We refer to Ren, Touzi \& Zhang \cite{RTZ-survey} for an overview.

Throughout this paper, the notion of $\cP-$viscosity solution refers that introduced in \cite{EKTZ, ETZ-part1}. For the sake of clarity, the classical notion of viscosity solutions based on pointwise tangent test functions will be sometimes referred to as the Crandall-Lions notion of viscosity solution.

The wellposedness of the notion of $\cP-$viscosity solutions was first proved in \cite{EKTZ} in the semilinear case, and later extended to the fully nonlinear case in \cite{ETZ-part2}. In contrast with the classical wellposedness theory for the Crandall-Lions viscosity solutions in finite dimensional spaces, the comparison and existence results proved in \cite{EKTZ,ETZ-part2} are inter-connected. Moreover, the proof relies heavily on the corresponding finite-dimensional PDE results applied to path-frozen versions of \eqref{PPDE-intro}, and thus does not really take advatange of the larger class of test functions. Finally, the technical conditions of \cite{EKTZ,ETZ-part2} exclude the degenerate case. In particular, their main wellposedness result can not be viewed as an extension of Lukoyanov \cite{Lukoyanov},  where the author studied the wellposedness of the viscosity solutions to the first order path-dependent PDEs.

In our recent paper \cite{RTZ} we provided a purely probabilistic comparison proof in the semilinear setting, which is completely disconnected from the existence result and which allows for degenerate equations. The importance of a separate proof of comparison was highlighted in the Crandall-Lions theory of viscosity solutions: it allows access to the Perron existence argument, and was shown to play an important role in the regularity of viscosity solutions, and in the convergence of numerical approximations together with the analysis of the order of the corresponding error. Similar to the classical finite-dimensional theory of viscosity solutions, the (disconnected) comparison result in \cite{RTZ} opens the door for an existence argument by the so-called Perron method, see Ren \cite{Ren-perron}, or by a limiting argument \`a la Barles \& Souganidis \cite{BarlesSouganidis}, see Zhang \& Zhuo \cite{ZhangZhuo} and Ren \& Tan \cite{RenTan}.

Our argument in \cite{RTZ} was crucially based on an adaptation of the Caffarelli \& Cabre \cite{CaffarelliCabre} notion of punctual differentiation to our setting, namely the $\cP-$punctual differentiation. In particular, denoting by $\dbP_0$ the Wiener measure on the space of continuous paths, we have reported in \cite{RTZ} an easy proof of the equivalence between our notion of $\{\dbP_0\}-$viscosity subsolution of the heat equation and the submartingale property. This equivalence implies an easy proof of the comparison result for linear path-dependent PDEs, thus highlighting the importance of enlarging the set of test functions under the notion of $\{\dbP_0\}-$viscosity solutions. The semilinear case is more involved, but uses standard stochastic analysis arguments. A crucial (and surprising) result obtained in \cite{RTZ} is that all $\cP-$viscosity subsolutions with appropriate integrability are $\cP-$punctually differentiable Leb$\otimes\dbP_0-$ almost everywhere on $[0,T]\times\Omega$. We recall that in the finite-dimensional context, the punctual differentiability is satisfied by an appropriate approximation of the subsolution. 
 
The main contribution of this paper is to provide a comparison result for the viscosity solutions of fully nonlinear path-dependent PDEs \eqref{PPDE-intro} which does not involve the existence issue. Our result is established under general conditions on the nonlinearity. Namely, we establish the comparison between $d_p-$uniformly continuous subsolutions and supersolutions under the conditions that the nonlinearity $G$ is 

- $d_p-$uniformly continuous in $\theta$, uniformly in $(y,z,\g)$, 

- and Lipschitz-continuous in $(y,z,\g)$, uniformly in $\theta$.

\no The last conditions represent a significant simplification of the assumptions required in Ekren, Touzi \& Zhang \cite{ETZ-part2}. Moreover, we emphasize that our conditions allow for degenerate parabolic equations, and therefore contain the first order path-dependent Hamilton-Jacobi equations of Lukoyanov \cite{Lukoyanov}. 

 Our arguments are inspired from the work of Jensen, Lions \& Souganidis \cite{JLS}, in which one of the main ideas is to find approximations of viscosity sub- and supersolutions of PDEs. The approximations proposed in \cite{JLS} are due to Lasry \& Lions \cite{LL}. Let $u$ be a viscosity subsolution, and $u^n$ be the approximations. A careful examination of their proof shows that good approximations should in general satisfy:

\vspace{3mm}

\noindent (A1) $\lim_{n\rightarrow \infty} u^n = u$;
\\
(A2) $u^n$ are more regular than $u$ (thus we may call the approximations as regularization);
\\
(A3) $u^n$ are still viscosity subsolutions for some equations approximating the original one.

\vspace{3mm}

A similar regularization was introduced in Ren \cite{Ren-perron}, for functions in the path space, in order to study the comparison of semi-continuous viscosity solutions to semilinear path-dependent PDEs. However such a regularization fails to achieve the purpose of the present paper.

As a key technical tool in this paper, we introduce a new regularization for viscosity sub- and supersolutions in the  context of fully nonlinear path-dependent PDEs, which allows to prove the final comparison result. Constrained by our method, we are unfortunately not able to compare the viscosity sub- and supersolutions which are continuous in the (pseudo-)distance $d_\infty$,
\beaa
d_\infty\big((t,\o),(t',\o')\big) &=& |t-t'| + \|\o_{t\we\cd} - \o'_{t'\we\cd}\|_\infty,
\eeaa
used in the previous works on path dependent PDEs. Instead, we prove the comparison for viscosity sub- and supersolutions which are continuous in the sense of the following (pseudo-)distance: 
 \beaa
 d_p\big((t,\o),(t',\o')\big) ~=~ |t-t'| + \|\o_{t\we\cd} - \o'_{t'\we\cd}\|_p^p,
 \q\mbox{where}\q \|\o\|_p^p = \int_{0}^{T+1} |\o_t|^p dt.
 \eeaa
This continuity is slightly stronger than that under $d_\infty$. However, since $\|\o\|_p\rightarrow \|\o\|_\infty$, our continuity requirement can be viewed as a slight strengthening of the $d_\infty-$continuity. In order to justify the relevance of the $d_p-$continuity, we provide in this paper a large class of path-dependent fully nonlinear equations with unique $d_p-$continuous viscosity solutions. This is achieved by complementing our comparison result with an example of stochastic control problem whose value function is a $d_p-$continuous $\cP-$viscosity solution of the corresponding path-dependent dynamic programming equation.

Given a $\cP-$viscosity solution $u$, the regularization introduced in the present paper defines functions $u^n$ satisfying the above requirement (A1). However, rather than verifying (A2) and (A3) in the sense of $\cP-$viscosity solutions, we show that $u^n$ induces a continuous finite-dimensional function  which is a viscosity solution of an appropriate PDE in the classical sense of Crandall-Lions. We recall that this is a weaker conclusion than the corresponding notion of $\cP-$viscosity solutions. This allows to reduce the comparison task to the well-established notion of viscosity solutions in the finite-dimensional context, and represents the major difference with the approach used in our previous paper \cite{RTZ} focused on the semilinear case.

The rest of the paper is organized as follows. Section \ref{sec:notation} introduces the main notations. Section \ref{sec:preli} recalls some useful results from the previous work on path dependent PDEs. Section \ref{sec:mainresult} states the main assumptions and results of this paper. In Section \ref{sect:regularization} we introduce the regularization, and prove its main properties. Further, in Section \ref{sect:comparison} we use the regularization to prove the comparison result. Finally, Section \ref{sec:appendix} concludes the paper by an example of a stochastic control problem whose value function is a $d_p$-continuous $\cP-$viscosity solution of the corresponding path-dependent dynamic programming equation, under natural assumptions on the ingredients of the control problem.

\section{Notations}\label{sec:notation}

Throughout this paper let $T>0$ be a given finite maturity, $\O:=\{\o\in C([0,T];\dbR^d):\o_0=0\}$ be the set of continuous paths starting from the origin, and $\Theta:=[0,T]\times\O$. For the convenience of notation, we often denote by $\th$ the pair $(t,\o)$. We denote by $B$ the canonical process on $\O$, by $\dbF = \{\cF_t, 0\le t\le T\}$ the canonical filtration,  by $\dbP_0$ the Wiener measure on $\O$, and by $\cT$ the set of all $\dbF$-stopping times taking values in $[0,T]$. Further, for $\ch \in \cT$, denote by $\cT_\ch$ the subset of $\t\in \cT$ taking values in $[0, \ch]$.

For $\o,\o'\in\O$ and $t\in [0,T]$, we define
 $$
 (\o\otimes_t\o')_s 
 := 
 \o_s\1_{\{s< t\}}
 +(\o_t+\o'_{s-t})\1_{\{s\ge t\}}.
 $$ 
Let $\xi:\O\rightarrow \dbR$ be $\cF_T$-measurable random variable. For any $\th=(t,\o) \in\Theta$, define
 \beaa
 \xi^\th (\o')
 :=
 \xi\big(\o\otimes_t\o'\big)
 &\mbox{for all}&
 \o'\in\O.
 \eeaa
Clearly, $\xi^{\th}$ is $\cF_{T-t}$-measurable, and thus $\cF_T$-measurable. Similarly, given a process $X$ defined on $\O$, we denote:
$$X^\th_s(\o^{'}):=X_{t+s}(\o\otimes_t\o^{'}),\ \text{for }s\in[0,T-t],~\o'\in\O.$$
Clearly, if $X$ is $\dbF$-adapted, then so is $X^\th$.

As in Ekren, Touzi \& Zhang \cite{ETZ-part2}, for every constant $L>0$, we denote by $\cP_L$ the collection of all continuous semimartingale measures $\dbP$ on $\O$ whose drift and diffusion are bounded by constant $L$, respectively. More precisely, let $\tilde\O:=\O\times\O\times\O$ be an enlarged canonical space, $\tilde B:=(B,A,M)$ be the canonical process. A probability measure $\dbP\in\cP_L$ means that there exists an extension $\dbQ^{\a,\b}$ of $\dbP$ on $\tilde \O$ such that:
\be\label{defn:PL}
\left.\ba{lll}
&B=A+M,\q A~\mbox{is absolutely continuous,}~M~\mbox{is a martingale,}&\\
&\|\a^\dbP\|_\infty, ~\|\b^\dbP\|_\infty ~\le ~ L,\q
\mbox{where}~\a^\dbP_t:=\frac{dA_t}{dt},~\b^\dbP_t:=\sqrt{\frac{d\langle M\rangle_t}{dt}},&
\ea\right.\dbQ^{\a,\b}\mbox{-a.s.}
\ee
 We also introduce the sublinear and superlinear expectation operators associated to $\cP_L$:
 \beaa
 \overline{\cE}_L
 :=\sup_{\dbP\in\cP_L} \dbE^\dbP
 &\mbox{and}&
 \underline{\cE}_L
 :=\inf_{\dbP\in\cP_L} \dbE^\dbP.
 \eeaa
 One may easily prove the following lemma.
 
 \begin{lem}\label{lem:EH}
There is a constant $C>0$ such that we have for all $\dbP\in \cP_L$ and $\ch\in \cT$ that
 \beaa
 \big| \dbE^\dbP [B_\ch] \big| \le C\dbE^\dbP [\ch]
 \q \mbox{and} \q
 \dbE^\dbP \big[ |B_\ch|^2 \big] \le C\dbE^\dbP [\ch]. 
 \eeaa
 \end{lem}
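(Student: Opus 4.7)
The proof should be a direct unpacking of the semimartingale decomposition in \eqref{defn:PL}. My plan is to work on the enlarged space $\tilde\Omega$ under $\dbQ^{\a,\b}$, write $B_\ch = A_\ch + M_\ch$, and estimate the two pieces separately using the bounds $\|\a^\dbP\|_\infty \le L$ and $\|\b^\dbP\|_\infty \le L$ together with the fact that $\ch \le T$.

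For the first inequality, I would note that $M$ is a true martingale (its quadratic variation density is bounded by $L^2$, so it is $L^2$-bounded on $[0,T]$), and $\ch$ is a bounded stopping time; optional stopping therefore gives $\dbE^\dbQ[M_\ch]=0$, hence $\dbE^\dbP[B_\ch] = \dbE^\dbQ[A_\ch]$. Writing $A_\ch = \int_0^\ch \a^\dbP_s\,ds$, the pointwise bound $|A_\ch| \le L\ch$ yields $|\dbE^\dbP[B_\ch]| \le L\,\dbE^\dbP[\ch]$.

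For the second inequality, I would use $|B_\ch|^2 \le 2|A_\ch|^2 + 2|M_\ch|^2$. The drift part is dominated by $2|A_\ch|^2 \le 2L^2 \ch^2 \le 2L^2 T\,\ch$, since $\ch \le T$. For the martingale part, the It\^o isometry (again justified by optional stopping and the $L^2$-boundedness of $M$) gives $\dbE^\dbQ[|M_\ch|^2] = \dbE^\dbQ[\tr\langle M\rangle_\ch] \le d L^2\,\dbE^\dbQ[\ch]$, because $\beta^\dbP(\beta^\dbP)^\top$ has trace at most $dL^2$. Combining the two bounds and taking $C := 2L^2(T+d)$ (or the maximum with $L$ for the first part) gives the required constant.

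The only mild subtlety is the justification of optional stopping and the $L^2$-type isometry for $M$ under $\dbQ^{\a,\b}$, but these are immediate from the uniform bound on $\b^\dbP$ and the fact that $\ch$ is bounded by $T$. No genuine obstacle is expected; the lemma is essentially a calculational consequence of the definition of $\cP_L$.
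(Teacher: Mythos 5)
Your proof is correct and is precisely the straightforward argument the paper leaves implicit (the paper merely remarks ``One may easily prove the following lemma'' and gives no proof): decompose $B = A + M$ on the enlarged space, bound $|A_\ch|\le L\ch$ pointwise, and handle $M$ via optional stopping and the It\^o isometry together with the bound $\tr\big(\b^\dbP(\b^\dbP)^\top\big)\le dL^2$. The only cosmetic remark is that the final constant can be stated as $C=\max\{L,\,2L^2(T+d)\}$ to cover both inequalities simultaneously.
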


In this paper, we consider a new $\dbL^p$-type of distance in the space $\Th$.

\begin{defn}
For $p\ge 1$, we introduce the following distance for the space $\Th$:
\beaa
d_p (\th,\th') ~:=~ |t-t'| + \| \o_{t\we\cd} - \o'_{t'\we\cd}\|_p , \q\mbox{for all}~\th,\th'\in \Th,
\eeaa
where 
\be\label{eq:deltap}
\| \o \|_p^p ~:=~  |\o_T|^p+\int_0^T |\o_s |^p ds,\q\mbox{for all}~ \o,\o'\in \O. 
\ee
We say that a function $f: \Th\rightarrow \dbR$ is $d_p$-continuous, if $f$ is continuous with respect to $d_p(\cd,\cd)$.
\end{defn}

\begin{rem}{\rm
Let $d_\infty(\cd,\cd)$ be the distance between continuous paths introduced by Dupire \cite{Dupire}, i.e.
\beaa
d_\infty (\th,\th') ~:=~ |t-t'| + \max_{0\le s\le T} |\o_{s\we t} -\o'_{s\we t'}|.
\eeaa
Note that
\beaa
d_p (\th,\th') ~\le ~ C d_\infty (\th,\th')
\q\mbox{and}\q
\lim_{p\rightarrow\infty} d_p (\th,\th') ~= ~  d_\infty (\th,\th').
\eeaa
In particular, a $d_p$-continuous function is automatically continuous in Dupire's sense.
}
\end{rem}

For later use, we observe that
\be\label{eq:stopint}
\|\o_{t\we\cd}\|_p^p ~ = ~ \int_0^{T+1} |\o_{t\we s}|^p ds ~ = ~  (T+1 -t) |\o_t|^p+\int_0^t |\o_s|^p ds.
\ee

Example \ref{eg:L1continuous} in Appendix provides sufficient conditions for the value function of a stochastic control problem to be uniformly $d_p$-continuous. On the other hand, as discussed in Ekren, Touzi \& Zhang \cite{ETZ-part1}, the value function of a stochastic control problem can be proved to be a viscosity solution of the corresponding path dependent Hamilton-Jaccobi-Bellman equation. Therefore, there are many examples of fully nonlinear path dependent PDEs which have uniformly $d_p$-continuous viscosity solutions. In this paper, we focus on the uniqueness of such solutions.

We would like to emphasize that, throughout this paper, $C$ denotes a generic constant, which may change from line to line. For example the reader may find $2C\le C$, without any contradiction as the left-hand side $C$ is different from the right-hand side $C$.

\section{Preliminaries}\label{sec:preli}
Consider the fully nonlinear parabolic PDE:
 \bea\label{eq: PDE}
 -\pa_t u - g(t, x, u, D u, D^2 u) ~=~ 0
 &t<T,&
 x\in\dbR^d,
 \eea
where $\partial_t$ denotes the time derivative, and $Du,D^2u$ denote the space gradient and Hessian, respectively.

We first recall the definition of the classical Crandall-Lions viscosity solutions for parabolic PDEs. For $\a\inÊ\dbR$, $\b\in\dbR^d$, $\g\in\dbS^d$, define the paraboloid $\psi^{\a,\b,\g}$:
\beaa
\psi^{\a,\b,\g}(t,x) := \a t + \b\cd x +\frac12 x^{\rm T} \g x,~~t\ge 0,~x\in\dbR^d.
\eeaa
Then define the jets:
 \beaa
 \ul J u(t,x) 
 &:=& 
 \Big\{(\a,\b,\g):\q u(t,x) ~\ge ~ u(s,y) -\psi^{\a,\b,\g}(s-t,y-x) +o(|s-t|,|y-x|^2) \Big\},
 \\
 \ol J v(t,x) 
 &:=& 
 \Big\{(\a,\b,\g):\q v(t,x) ~\le ~ v(s,y) -\psi^{\a,\b,\g}(s-t,y-x) +o(|s-t|,|y-x|^2)  \Big\}.
\eeaa
We say that function $u$ is a viscosity subsolution of the PDE \eqref{eq: PDE}, if 
\beaa
-\a - g (t,x,u, \b,\g) ~\le~0, \q\mbox{for all}~(t,x)\in (0,T)\times\dbR^d~\mbox{and} ~
(\a,\b,\g)\in \ul J u(t,x).
\eeaa
Similarly, a function $v$ is a viscosity supersolution of the PDE \eqref{eq: PDE}, if 
\beaa
-\a - g (t,x,v, \b,\g) ~\ge~0, \q\mbox{for all}~(t,x)\in (0,T)\times\dbR^d~\mbox{and} ~
(\a,\b,\g)\in \ol J v(t,x).
\eeaa

In this paper, we consider the fully nonlinear parabolic path dependent PDE:
\be\label{eq: PPDE}
-\pa_t u - G(\th , u, \pa_\o u, \pa^2_{\o\o} u) ~=~ 0.
\ee
In our previous work \cite{RTZ-survey,RTZ} on viscosity solutions of path-depedent PDEs, it was already understood that one can define viscosity solutions via jets. For simplicity, this paper starts directly from this definition as it avoids to introduce the notion of smooth processes (i.e. those processes which satisfy an It\^o formula simultaneously under all probability measures $\dbP\in\cP_L$). Let
\beaa
 \phi^{\a,\b,\gamma}(\th)
 :=
 \a t+\b\cdot\omega_t+\frac12\o_t^{\rm T} \gamma\o_t,
 \q \th\in\Th,
 \eeaa
for some $(\a,\b,\g)\in\dbR\times\dbR^d\times\dbS_d$. We then introduce the corresponding subjet and superjet:
 \beaa
 \underline{\cJ}_L u(\th)
 &:=&
 \big\{ (\a,\b,\g)\in\dbR\times\dbR^d\times\dbS_d: 
          u(\th) ~=~ \max_{\t\in \cT_{\ch_\d}} \ol\cE_L\big[(u^{\th} - \phi^{\a,\b,\g})(\t,B)\big],~\mbox{for some}~\d>0
 \big\},
 \\
 \overline{\cJ}_{\!\!L} v(\th)
 &:=&
 \big\{ (\a,\b,\g)\in\dbR\times\dbR^d\times\dbS_d: 
            v(\th) ~=~ \min_{\t\in \cT_{\ch_\d}} \ul\cE_L\big[(v^{\th} - \phi^{\a,\b,\g})(\t,B)\big],~\mbox{for some}~\d>0
 \big\},
 \eeaa
 where
\be\label{eq:Heps} 
 \ch_\d := \d \we \inf\{t\ge 0: |B_t|\ge \d \}
 \ee
 is a stopping time, and is called the localization. 

\begin{defn}
 A function $u:[0,T]\times\O\longrightarrow\dbR$ is a 
 \\
 $\bullet$ $\cP_L$-viscosity subsolution of \eqref{eq:  PPDE} if
 $- \a - G(\th,u(\th),\b,\gamma)\le 0$ for all $\th\in \Theta,$ $(\a,\b,\gamma)\in\underline{\cJ}_L u(\th)$;
 \\
 $\bullet$ $\cP_L$-viscosity supersolution of \eqref{eq:  PPDE} if
 $- \a - G(\th,u(\th),\b,\gamma)\ge 0$ for all $\th\in \Theta,$ $(\a,\b,\gamma)\in\overline{\cJ}_L u(\th)$.
\end{defn}

It was proved in Ren, Touzi \& Zhang \cite{RTZ-survey} that this definition is equivalent to the original definition of Ekren, Touzi \& Zhang \cite{ETZ-part1} whenever the function $u$ and the nonlinearity $G(.,y,z,\g)$ are $d_\infty-$continuous. By following the same line of argument, the same equivalence of definitions holds under our $d_p-$continuity assumptions.

We next recall the Snell envelop characterization of the optimal stopping problem under nonlinear expectation, see Theorem 3.5 in Ekren, Touzi \& Zhang \cite{ETZ-os}. 

\begin{lem}\label{lem:OSprep}
Let $X:\Th\rightarrow\dbR$ be $d_p$-uniformly continuous. Consider the optimal stopping problem:
\beaa
V(\th) &:=& \sup_{\t\in \cT_{\ch^\th_\d-t}}\ol\cE_L \big[X^\th_\t \big] .
\eeaa
Then, denoting $\hat V_t := V_t 1_{\{t<\ch_\d\}} + V_{\ch_\d-}1_{\{t \ge\ch_{\d}\}}$, we have
\beaa
V_0 = \ol\cE_L [X_{\t^*}]
&\mbox{where}&
\t^* := \inf\{t\ge 0: X_t = \hat V_t \}.
\eeaa
\end{lem}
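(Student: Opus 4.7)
The plan is to follow the classical Snell-envelope strategy, adapted to the sublinear expectation $\ol\cE_L$. The first step is to establish a dynamic programming principle for $V$ on $[0,\ch_\d]$: for every $\sigma\in\cT_{\ch_\d}$,
\[
V_0 \;=\; \ol\cE_L\big[\hat V_\sigma\big].
\]
This is the sublinear tower property for the value process, and relies on the stability of the family $\cP_L$ under conditioning and concatenation, which is built into the definition \eqref{defn:PL}.

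Next I would use the $d_p$-uniform continuity of $X$, together with the translation structure $X^\theta$ and the moment bound of Lemma \ref{lem:EH}, to propagate regularity from $X$ to $V$. In particular $V$ should inherit $d_p$-uniform continuity on $\Th$, so the paths $t\mapsto X_t$ and $t\mapsto \hat V_t$ are continuous up to the localization $\ch_\d$. Consequently $\tau^*=\inf\{t\ge 0:X_t=\hat V_t\}$ is an $\dbF$-stopping time in $\cT_{\ch_\d}$, and $X_{\tau^*}=\hat V_{\tau^*}$ pointwise by continuity.

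The optimality of $\tau^*$ then follows from the usual dichotomy. On $\{t<\tau^*\}$ one has $X_t<\hat V_t$ strictly, which combined with the DPP forces $\hat V$ to be an $\ol\cE_L$-martingale on $[0,\tau^*]$ rather than only a supermartingale. Applying the DPP at $\sigma=\tau^*$ then gives
\[
V_0 \;=\; \ol\cE_L\big[\hat V_{\tau^*}\big] \;=\; \ol\cE_L\big[X_{\tau^*}\big],
\]
which is the desired identity; the converse inequality $V_0\ge\ol\cE_L[X_{\tau^*}]$ is automatic since $\tau^*\in\cT_{\ch_\d}$ is admissible.

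The hard part will be the nonlinear DPP itself, together with the martingale upgrade on $[0,\tau^*]$. Under the nondominated family $\cP_L$ these require a measurable-selection and pasting argument to glue $\e$-optimal stopping rules across the different $\dbP\in\cP_L$, which is precisely the content of Theorem 3.5 of \cite{ETZ-os}. The cleanest route is therefore to verify that the hypotheses of that theorem remain valid under the present $d_p$-continuity assumption on $X$ (which is slightly stronger than the $d_\infty$-continuity used in \cite{ETZ-os}) and then invoke it directly, rather than reconstruct the measurable-selection machinery from scratch here.
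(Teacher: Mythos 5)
Your proposal is correct and arrives at precisely the same place as the paper, which provides no in-text proof of this lemma at all but simply quotes it as a recall of Theorem 3.5 of \cite{ETZ-os}; your final paragraph (verify the hypotheses of that theorem carry over from $d_\infty$- to $d_p$-uniform continuity, then invoke it) is exactly the paper's intent. The preliminary sketch of the DPP/martingale-characterization argument is a faithful outline of what the cited theorem establishes, and your observation that $d_p$-uniform continuity implies $d_\infty$-uniform continuity (so the cited hypotheses are indeed met) is the one nontrivial check required.
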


As a consequence of the last fundamental result from optimal stopping theory, we now provide our main technical substitute for the local compactness argument in the finite-dimensional Crandall-Lions viscosity solutions.

\begin{lem}\label{lem:OS}
Let $u$ be $d_p$-uniformly continuous function satisfying $u({\bf 0})>\ol\cE_L \big[(u - \phi^{\a,\b,\g})(\ch_\d,B) \big]$, for some $\d>0$ and $(\a,\b,\g)\in\dbR\times\dbR^d\times\dbS^d$. 
Then, there exists $\th^*=(t^*,\omega^*)$ such that
 \beaa
 t^*<\ch_\d(\o^*)
 &\mbox{and}&
 (\a,\b + \g\o^*_{t^*},\g) ~\in ~ \underline{\cJ}_L u(\th^*). 
 \eeaa
\end{lem}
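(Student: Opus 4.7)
The plan is to treat this lemma as a path-dependent substitute for the classical ``move to a nearby contact point'' device, with Lemma \ref{lem:OSprep} playing the role otherwise played by local compactness. Concretely, I would define the payoff $X(\th) := (u - \phi^{\a,\b,\g})(\th)$, which inherits $d_p$-uniform continuity from $u$ and the quadratic test function, and consider its sublinear Snell envelope $V(\th) := \sup_{\t\in\cT_{\ch_\d^\th-t}} \ol\cE_L[X^\th_\t]$. Lemma \ref{lem:OSprep} then produces the optimal stopping time $\t^* := \inf\{t \ge 0 : X_t = \hat V_t\}$ and the identity $V({\bf 0}) = \ol\cE_L[X_{\t^*}]$, which will be used to locate a candidate $\th^*$.

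The first key step is to show that $\{\t^* < \ch_\d\}$ is non-empty. Taking $\t = 0$ in the definition of $V({\bf 0})$ gives $V({\bf 0}) \ge X_0 = u({\bf 0})$; if $\t^*(\o) = \ch_\d(\o)$ held for every $\o\in\O$, then $X_{\t^*} = X_{\ch_\d}$ as random variables, whence $V({\bf 0}) = \ol\cE_L[X_{\ch_\d}]$, contradicting the strict hypothesis $u({\bf 0}) > \ol\cE_L[X_{\ch_\d}]$. Hence some $\o^*\in\O$ satisfies $t^* := \t^*(\o^*) < \ch_\d(\o^*)$, and consequently $|\o^*_{t^*}|<\d$. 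At $\th^* := (t^*,\o^*)$ the very definition of $\t^*$ yields $X(\th^*) = \hat V_{t^*}(\o^*) = V_{t^*}(\o^*) = V(\th^*)$, where the middle equality uses $\hat V_t = V_t$ on $\{t<\ch_\d\}$.

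To extract the claimed subjet element, I would exploit the elementary quadratic shift identity
\beaa
\phi^{\a,\b,\g}(t^* + s,\, \o^*\otimes_{t^*}\o') - \phi^{\a,\b,\g}(\th^*) &=& \phi^{\a,\, \b + \g\o^*_{t^*},\, \g}(s,\o'),
\eeaa
which follows by direct expansion of the quadratic form. This implies $X^{\th^*} = u^{\th^*} - \phi^{\a,\b,\g}(\th^*) - \phi^{\a,\b + \g\o^*_{t^*},\g}$. Choosing $\d'\in(0,\min(\d - t^*,\, \d - |\o^*_{t^*}|))$, the triangle inequality $|\o^*_{t^*} + \o'_r| \le |\o^*_{t^*}| + |\o'_r|$ forces $\ch_{\d'} \le \ch_\d^{\th^*} - t^*$ as stopping times, so every $\t\in\cT_{\ch_{\d'}}$ is admissible in the sup defining $V(\th^*)$. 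Combining $V(\th^*) = X(\th^*)$ with the suboptimality inequality $V(\th^*) \ge \ol\cE_L[X^{\th^*}_\t]$ and the constant-shift invariance of $\ol\cE_L$ gives
\beaa
u(\th^*) &\ge& \ol\cE_L\big[(u^{\th^*} - \phi^{\a,\b + \g\o^*_{t^*},\g})(\t,B)\big],
\eeaa
with equality at $\t = 0$, which is precisely the statement $(\a,\b + \g\o^*_{t^*},\g)\in\underline{\cJ}_L u(\th^*)$.

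The main obstacle I anticipate is the pathwise existence of $\o^*$ in the second step; the strict inequality in the hypothesis is exactly what rules out the pathological case $\t^*\equiv\ch_\d$. A secondary but routine technical point is matching the two localizations (the $\ch_\d$ from the original optimal stopping problem and the $\ch_{\d'}$ required by the subjet definition on shifted paths), and this is resolved by the explicit choice of $\d' < \d - |\o^*_{t^*}|$ above.
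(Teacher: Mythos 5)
Your proposal is correct and follows the paper's proof essentially step for step: define $X = u - \phi^{\a,\b,\g}$, invoke the Snell-envelope characterization (Lemma \ref{lem:OSprep}) to obtain $\t^*$, use the strict hypothesis to rule out $\t^*\equiv\ch_\d$ and thereby produce a contact point $\th^*$ with $t^*<\ch_\d(\o^*)$, and finally expand the quadratic form to identify the subjet element $(\a,\b+\g\o^*_{t^*},\g)$. The only difference is cosmetic: you make explicit the quadratic-shift identity and the localization matching ($\ch_{\d'}\le\ch_\d^{\th^*}-t^*$), which the paper treats tacitly.
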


\proof Define the optimal stopping problem $V$:
\beaa
V(\th) := \sup_{\t\in\cT_{\ch^\th_\d-t}} \ol\cE_L \big[ X^\th (\t,B) \big].
\eeaa
 with $X := u - \phi^{\a,\b,\g}$. Let $\t^*\in \cT_{\ch_\d}$ be the optimal stopping rule. By Lemma \ref{lem:OSprep} we have
\beaa
\ol \cE_L[X_{\t^*}] = V_0 \ge X_0 > \ol \cE_L [ X_{\ch_\d}] 
&\mbox{and}& 
X_{\t^*} = \hat V_{\t^*},  
\eeaa
So there exists $\o^* \in \O$ such that $t^* := \t^*(\o^*) < \ch_\d(\o^*)$ and $X_{t^*}(\o^*) = \hat V_{t^*}(\o^*) = V_{t^*}(\o^*)$, i.e.
\beaa
u(\th^*) 
& = &  \sup_{\t\in \cT_{\ch^{\th^*}_\d-t^*}} \ol\cE_L \Big[ u^{\th^*}_\t -\a\t -\b\cd B_\t - \frac12(\o^*_{t^*}+B_\t)^{\rm T} \g (\o^*_{t^*}+B_\t) + \frac12 (\o^*_{t^*})^{\rm T} \g \o^*_{t^*}\Big]\\
& = &  \sup_{\t\in \cT_{\ch^{\th^*}_\d-t^*}} \ol\cE_L \Big[ u^{\th^*}_\t -\a\t - (\b+\g \o^*_{t^*}) \cd B_\t - \frac12 B_\t^{\rm T} \g B_\t \Big]
\eeaa
 By the definition of $\ul\cJ_L u$, this means that $(t^*, \o^*)$ is the required point.
\qed

\section{Main result}\label{sec:mainresult}

We shall establish our main comparison result under the following general conditions on the nonlinearity $G$.

\begin{assum}\label{assum:G}
The nonlinearity $G$ satisfies the following conditions:
\\
{\rm (i)} $G$ is elliptic, i.e. $G(\th, y, z, \g) \le G(\th, y,z, \g')$, for $\g\le \g'$.
\\
{\rm (ii)} $G$ is $d_p$-uniformly continuous in $\th$, uniformly in $(y,z,\g)$, i.e. for some continuity modulus $\rho^G$:
\beaa
|G(\th,y,z,\g) - G(\th',y,z,\g) | &\le & \rho^G(d_p (\th,\th')), \q\mbox{for all}\q (y,z,\g)\in \dbR\times\dbR^d\times \dbS^d.
\eeaa
{\rm (iii)} $G$ is uniformly Lipschitz continuous in $(y,z,\g)$, i.e. there is a constant $L_0$ such that
\beaa
|G(\th, y,z,\g)-G(\th, y',z',\g')| &\le & L_0 \big(|y-y'|+|z-z'|+|\g-\g'|\big), \q\mbox{for all}\q \th\in\Th.
\eeaa 
\end{assum}

The first condition restricts the path-dependent PDE to the parabolic case. The remaining technical conditions are required in our proofs. In contrast with the comparison result established in Ekren, Touzi \& Zhang \cite{ETZ-part2}, we emphasize that the above conditions do not exclude degenerate path-dependent second order PDEs. In particular, our main result, Theorem \ref{thm:comparison} below, holds for first order path-dependent PDEs which satisfy the above conditions (ii)-(iii), and thus covers the path-dependent Hamilton-Jacobi PDEs analyzed in Lukoyanov \cite{Lukoyanov}.

\begin{thm}\label{thm:comparison}
Let $u,v:\Th\rightarrow\dbR$ be bounded and $d_p$-uniformly continuous $\cP_L$-viscosity subsolution and $\cP_L$-viscosity supersolution of \eqref{eq: PPDE}, respectively.  Under Assumption \ref{assum:G}, if $u(T,\cd) \le v(T,\cd)$, then $u\le v$ on $\Th$.
\end{thm}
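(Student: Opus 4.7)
The plan is to implement the Jensen--Lions--Souganidis regularization strategy in the path-dependent setting announced in the introduction. Suppose by contradiction that $c_0 := \sup_\Th (u-v) > 0$; the $d_p$-uniform continuity of $u-v$ together with the terminal condition $u(T,\cd) \le v(T,\cd)$ confines the positive mass to $[0,T-\eta]\times\O$ for some $\eta>0$. I would introduce the $d_p$-sup and $d_p$-inf convolutions
\[
u^n(\th) := \sup_{\th'\in\Th}\bigl\{u(\th') - n\, d_p(\th,\th')^2\bigr\},\qquad v^n(\th) := \inf_{\th'\in\Th}\bigl\{v(\th') + n\, d_p(\th,\th')^2\bigr\},
\]
and check condition (A1): $u^n \downarrow u$, $v^n \uparrow v$ uniformly on bounded sets, with near-optimizers sitting within $d_p$-distance $O(n^{-1/2})$. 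The choice of $d_p$ rather than $d_\infty$ is essential here: since $\|\cd\|_p^p$ is an integral, $d_p^2$ behaves quadratically in path perturbations, mirroring the Euclidean $|x-x'|^2$ penalty and making it compatible with the semimartingale perturbations underlying $\cP_L$.

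For (A2), I would establish a $d_p$-semiconvexity of $u^n$ and $d_p$-semiconcavity of $v^n$ with constant $O(n)$, yielding pointwise second-order expansions along path directions. The crucial step (A3), and the main obstacle, is to transfer the viscosity sub/supersolution property. Rather than proving $u^n$ to be a $\cP_L$-viscosity subsolution of a perturbed PPDE directly, I would follow the finite-dimensional reduction announced in the introduction: fix a near-maximizer $\th^* = (t^*,\o^*)$ of $u^n - v^n$, freeze $\o^*$ on $[0,t^*]$, and parametrize the short-time continuation by $x\in\dbR^d$ via a simple path (say the linear segment $r\mapsto (r/s)x$ on $[0,s]$). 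Let $\phi^n(s,x)$ and $\psi^n(s,x)$ denote the resulting finite-dimensional restrictions of $u^n$ and $v^n$ on $[0,\eta]\times\dbR^d$. The target of (A3) is to show that $\phi^n$ is a Crandall--Lions viscosity subsolution, and $\psi^n$ a Crandall--Lions viscosity supersolution, of
\[
-\pa_s w - G\bigl(\th^*,w,Dw,D^2w\bigr) \le \rho^G(\e_n),\qquad \e_n\to 0,
\]
on $(0,\eta)\times\dbR^d$. The required Crandall--Lions jet elements are to be extracted from the $\cP_L$-subjet/superjet elements for $u^n,v^n$ produced by Lemma \ref{lem:OS}, which plays here the role of local compactness lacking in the path space.

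Once (A3) is in hand, the classical Ishii / Jensen--Lions--Souganidis doubling-of-variables argument in finite dimensions, combined with Assumption \ref{assum:G}(ii)--(iii), yields the quantitative bound $\sup_\Th (u^n - v^n) \le C\rho^G(\e_n)$; passing $n\to\infty$ via (A1) gives $c_0 \le 0$, contradicting $c_0>0$. I expect the hardest step to be (A3): rigorously producing the Crandall--Lions viscosity property of $\phi^n,\psi^n$ from the $\cP_L$-viscosity property of $u^n,v^n$ requires a delicate interplay between the $d_p$ sup-convolution penalty, the localization $\ch_\d$, and the sublinear/superlinear expectations $\ol\cE_L,\ul\cE_L$ defining the jets. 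It is precisely here that the $d_p$-continuity (rather than $d_\infty$) appears indispensable, since the semiconvexity gained from the regularization must be expressed in the same metric that governs the path-frozen continuation.
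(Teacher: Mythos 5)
Your high-level plan (Jensen--Lions--Souganidis regularization, finite-dimensional reduction, doubling of variables) is the right strategy, and you correctly identify that the whole proof hinges on transferring the viscosity property to the finite-dimensional restriction. However, the proposal as written has several genuine gaps that your sketch does not close; indeed, the specific choices the paper makes are precisely designed to fix these.

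First, the sup-convolution $u^n(\th)=\sup_{\th'}\{u(\th')-n\,d_p(\th,\th')^2\}$ is not the paper's regularization and is not adequate as stated. The paper's $u^n$ penalizes with a \emph{time re-parametrization} $\ell\in\cL_{t,s}$ in addition to a spatial $\|\cdot\|_{p+1}^{p+1}$ penalty, i.e.\ $\Phi(s,\eta,\th,\ell)=\|\ell-I\|_\infty^{2/(3p+3)}+\|\eta_{s\wedge\ell(\cd)}-\o_{t\wedge\cd}\|_{p+1}^{p+1}$. The re-parametrization is essential because the argument $\eta$ of the regularized function is a \emph{piecewise constant} (c\`adl\`ag) path, which can never be approximated by continuous paths $\o$ in a pure $d_p$-metric without first stretching/compressing time; the term $\|\ell-I\|_\infty^{2/(3p+3)}$ is what makes the finite-dimensional restriction $u^{n,\l_i}(s,x)$ H\"older in $s$, and the $p+1$-exponent (together with Remark \ref{rem:odd} taking $p$ odd) is exactly what produces a differentiable penalty whose leading term cancels against the drift of $B$ in Step (v) of Proposition \ref{prop:PPDE2PDE}. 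A plain $n\,d_p^2$ penalty does neither.

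Second, your parametrization of the continuation by linear segments $r\mapsto (r/s)x$ does not have the additive shift structure that the argument requires. The paper instead restricts to piecewise constant paths $\eta^{\l_i}(x)$, so that adding $B_\ch$ to the endpoint simply shifts the flat tail of the path; this is what allows the crucial identity $\Delta\Phi$ in the proof of Proposition \ref{prop:PPDE2PDE} to be computed and lower-bounded by Gaussian-type moments via Lemma \ref{lem:EH}. With a linear segment, varying $x$ changes the whole ramp, and the difference quotient of the penalty loses this tractable structure.

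Third, and most substantively, you describe the transfer in step (A3) as: produce $\cP_L$-jet elements for $u^n,v^n$ via Lemma \ref{lem:OS}, then read off Crandall--Lions jets for the restrictions $\phi^n,\psi^n$. This is backwards and, as written, not available: there is no reason a $\cP_L$-jet of $u^n$ yields a Crandall--Lions jet of the finite-dimensional slice, and you never prove $u^n$ is a $\cP_L$-viscosity subsolution. The paper's Proposition \ref{prop:PPDE2PDE} goes the other way: it starts from a Crandall--Lions jet $(\a,\b,\g)\in\ul J u^{n,\l_i}(s,x)$, uses an $\e$-optimizer $(\tilde\th,\tilde\ell)$ in the definition of $u^n$ and the definition of $u^n$ at the shifted argument to extract a strict expectation inequality for the \emph{original} $u$ along a short horizon $\ch_\d$, invokes Lemma \ref{lem:OS} for $u$ (not $u^n$) to produce a $\cP_L$-subjet point of $u$, and only then applies the subsolution property. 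Your statement that Lemma \ref{lem:OS} is to be applied to $u^n,v^n$ misses the key pivot of the argument.

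Finally, a single doubling-of-variables pass cannot give $\sup_\Th(u^n-v^n)\le C\rho^G(\e_n)$: the finite-dimensional subsolution property of $u^{n,\l_i}$ holds only on one time-slab $(s^n_i,s^n_{i+1})\times\dbR^d$ with a frozen history $\l_i$. The paper iterates the doubling argument over the $m_n\sim n^{1+a}$ slabs (using the one-sided semicontinuity at the left endpoints $s^n_i$ from Corollaries \ref{cor:PDEsub}(iii) and \ref{cor:PDEsuper}(iii) to glue), and separately needs Lemma \ref{lem:un(T)} to control $u^n-u$ and $v^n-v$ at $t=T$ on the non-compact but polynomially growing range $|{\rm x}_i|_p\le Cn^{\hat a}$; your sketch contains neither the iteration nor this quantitative bound at the terminal time, so the conclusion $c_0\le 0$ does not yet follow.
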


The proof of this result will be provided in Section \ref{sect:comparison} after the preparations of Section \ref{sect:regularization}. We conclude this short section by some remarks which will be recalled in our subsequent proof of Theorem \ref{thm:comparison}.

\begin{rem}\label{rem:monotone} {\rm
If $u$ is a $\cP_L$-viscosity subsolution of \eqref{eq: PPDE}, then function $\tilde u := e^{-Lt} u$ is a $\cP_L$-viscosity subsolution of
\beaa
-\pa_t \tilde u - \tilde G (\th,\tilde u, \pa_\o \tilde u,\pa^2_{\o\o} \tilde u) &=& 0. 
\eeaa
where $\tilde G$ is non-decreasing in $y$. A similar statement holds for $\cP_L$-viscosity supersolutions. 

In view of this result, , we shall assume throughout the paper, without loss of generality, that nonlinearity $G$ is non-decreasing in $y$. Consequently, we may modify Assumption \ref{assum:G} (iii) as:
\ms

\no (iii') {\it There is a constant $L_0$ such that
\be\label{assum:monotone}
G(\th, y,z,\g)-G(\th, y',z',\g') ~\le ~ L_0 \big((y-y')^++|z-z'|+|\g-\g'|\big), \q\mbox{for all}\q \th\in\Th.
\ee
}}
\end{rem}

\begin{rem}\label{rem:odd}{\rm
Since all $d_p$-uniformly continuous function is $d_q$-uniformly continuous, for $q>p$, it is sufficient to prove the theorem for the largest possible values of $p$. For technical reasons, we shall choose $p$ to be odd and $p>1$.
}
\end{rem}

\section{Regularization}\label{sect:regularization}

In this section, we introduce the crucial regularization $u^n$ of the viscosity subsolution $u$. Recall the $\|\cd\|_p$-norm defined in \eqref{eq:deltap}. For $s\ge 0$, a c\`adl\`ag path $\eta$, and any increasing function $\ell$, we define the penalization function:
\beaa
\F(s,\eta,\th,\ell) & := & \|\ell - I\|_{\infty}^{\frac{2}{3p+3}} +  \| \eta_{s\we\ell(\cd)}-  \o_{t\we\cd} \|_{p+1}^{p+1},~~ \th\in \Th,
\eeaa
where $I:[0,T]\rightarrow [0,T]$ is the identity function and $\|\ell - I\|_{\infty} := \sup_{0\le t\le T} \big|\ell(t)-t \big|$.
Denote ${\bf 0} := (0,0)$, and define the regularization:
\be\label{defn2:un}
u^n(s,\eta)
:= \sup_{\th\in \Th \backslash {\bf 0},~\ell\in \cL_{t,s}}\Big\{ u(\th)-n \F (s,\eta,\th,\ell)
\Big\},
\ee
where, for $s>0$, $\cL_{t,s}$ is the set of the increasing functions $\ell:[0,T]\rightarrow\dbR$ such that
 \bea\label{eq:Lts}
 \ell|_{[0,t]}:[0,t]\rightarrow [0,s]
 &\mbox{is an increasing bijection, and}&
 \ell(r) := r -t+s~\mbox{for}~r\in (t,T],
 \eea
For $s=0$, this set is reduced to a signleton $\cL_{t,0}=\{\ell^{t,0}\}$, with $\ell^{t,0}:[0,T]\rightarrow\dbR$ defined as:
\be\label{eq:Lt0}
\ell^{t,0}(r) = (r-t)1_{ (t,T]}(r), \q\mbox{for all}~~r\in [0,T].
\ee
Notice that for $s>0$, any $\ell\in\cL_{t,s}$ is an injection, so that we can naturally define the inverse function $\ell^{-1}$ on the image of $\ell$. For later use, we also note that for all $\ell \in \cL_{t,s}$, we have
\be\label{eq:observest}
\| \eta_{s\we\ell(\cd)}-  \o_{t\we\cd} \|_{p+1}
~=~ \| \eta_{\ell(t\we\cd)}-  \o_{t\we\cd} \|_{p+1}.
\ee
\begin{rem}\label{rem:ell}{\rm
Notice from \eqref{eq:Lts}, \eqref{eq:Lt0} and \eqref{eq:observest} that the values of the function $\ell$ on $(t,T]$ do not have any impact on the value of the penalization $\F(s,\eta,\th,\ell)$. In order to construct a function $\ell\in \cL_{t,s}$, it is sufficient to define $\ell |_{[0,t]}$, i.e. its values on $[0,t]$. The rest is given by \eqref{eq:Lts} or \eqref{eq:Lt0}.
Defining $\ell$ on the whole interval $[0,T]$ instead of only on $[0,t]$ is only useful for notational simplicity. 
}
\end{rem}

\begin{lem}\label{lem:uniformbound}
The sequence $(u^n)_n$ is non-increasing in $n$. Moreover, for bounded $u$, we have $\|u^n\|_\infty\le \|u\|_\infty$ for all $n\ge 0$.
\end{lem}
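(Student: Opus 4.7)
Both assertions are expected to follow directly from the definition \eqref{defn2:un}, using only the nonnegativity of the penalization $\F$ and an elementary choice of admissible pair $(\theta,\ell)$ that makes $\F$ vanish. The plan is to verify them in order.

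\textbf{Monotonicity.} Since $\F \ge 0$, the map $n \mapsto u(\theta) - n\F(s,\eta,\theta,\ell)$ is nonincreasing for each fixed $(\theta,\ell)$. Taking the supremum over $(\theta,\ell) \in (\Theta\setminus\{{\bf 0}\}) \times \cL_{t,s}$ preserves this, yielding $u^{n+1} \le u^n$ pointwise.

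\textbf{Upper bound.} Discarding the nonnegative penalization in \eqref{defn2:un} immediately gives $u^n(s,\eta) \le \sup_{\theta \in \Theta} u(\theta) \le \|u\|_\infty$.

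\textbf{Lower bound.} It suffices to exhibit a single admissible pair $(\theta,\ell)$ with $\F = 0$, since then $u^n(s,\eta) \ge u(\theta) \ge -\|u\|_\infty$. For any $(s,\eta) \ne {\bf 0}$, the natural candidate is $\theta = (s,\eta)$ together with the identity $\ell = I$ on $[0,T]$: reading off \eqref{eq:Lts} one checks that $I \in \cL_{s,s}$, the first term of $\F$ vanishes because $\|I - I\|_\infty = 0$, and the second vanishes by \eqref{eq:observest} since $\eta_{s \wedge \cdot} = \omega_{t \wedge \cdot}$. At the excluded origin $(s,\eta) = {\bf 0}$, this choice is forbidden, so I would instead take any $\theta = (0,\omega)$ with $\omega \not\equiv 0$ (so that $\theta \ne {\bf 0}$; such $\omega$ exists in $\Omega$) paired with the unique $\ell^{0,0} \in \cL_{0,0}$. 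Because paths in $\Omega$ start from the origin, $\omega_{0 \wedge \cdot} \equiv 0$, while \eqref{eq:Lt0} gives $\ell^{0,0}(r) = r$ on $(0,T]$, so both terms of $\F$ vanish once again, and $u^n({\bf 0}) \ge u(0,\omega) \ge -\|u\|_\infty$.

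No step is a serious obstacle; the lemma amounts to a bookkeeping check on the definition. The only point deserving a moment of care is the exclusion of ${\bf 0}$ from the supremum in \eqref{defn2:un}, which forces one to replace the identity-style choice at the origin by a nearby $\theta = (0,\omega)$; the observation that $\F$ still vanishes there because paths start from the origin renders this harmless.
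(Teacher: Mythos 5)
Your monotonicity and upper-bound arguments are correct and coincide with the paper's. The lower bound, however, has a genuine gap: you choose $\theta=(s,\eta)$ as an admissible competitor in the supremum of \eqref{defn2:un}, but that supremum runs over $\theta\in\Theta\setminus\{{\bf 0}\}$, and $\Theta=[0,T]\times\Omega$ with $\Omega$ the space of \emph{continuous} paths started from the origin. The regularization $u^n(s,\eta)$ is defined for a general c\`adl\`ag path $\eta$ -- indeed the whole point of the construction is to apply it to the piecewise-constant paths $\eta^{\lambda_i}(x)$ of \eqref{def:piececonstpath}, which are neither continuous nor, in general, zero at time $0$. So $(s,\eta)\notin\Theta$ for exactly the evaluation points used later, and your "natural candidate" is inadmissible. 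The issue you flagged (the exclusion of ${\bf 0}$) is a minor side case; the real obstruction is that $\eta$ need not lie in $\Omega$ at all.

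The repair is precisely what the paper does: approximate $\eta$ by genuinely continuous paths. For $s>0$, pick $\omega^\e\in\Omega$ with $\|\eta_{s\wedge\cdot}-\omega^\e_{s\wedge\cdot}\|_{p+1}\to 0$, take $\theta=(s,\omega^\e)$ and $\ell=I\in\cL_{s,s}$, and observe that $u^n(s,\eta)\ge u(s,\omega^\e)-n\|\eta_{s\wedge\cdot}-\omega^\e_{s\wedge\cdot}\|_{p+1}^{p+1}\ge -\|u\|_\infty - n\|\eta_{s\wedge\cdot}-\omega^\e_{s\wedge\cdot}\|_{p+1}^{p+1}$; letting $\e\to 0$ gives $u^n(s,\eta)\ge -\|u\|_\infty$ using only boundedness of $u$. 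The $s=0$ case is handled similarly with $\theta=(\e,\omega^\e)$, $\ell|_{[0,\e]}\equiv 0\in\cL_{\e,0}$, and $\omega^\e\in\Omega$ approximating the constant $\eta_0$; note that this also covers general $\eta$ with $\eta_0\neq 0$, which your origin discussion misses. Your choice at $(s,\eta)=(0,0)$ of $\theta=(0,\omega)$ with $\omega\not\equiv 0$ and $\ell^{0,0}=I$ is in fact valid there, but it does not extend to the other cases.
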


\begin{proof}
The non-increase of the sequence $(u^n)_n$ is obvious.
The inequality $u^n\le\|u\|_\infty$ follows immediately from the definition of the regularization $u^n$. On the other hand, for $s>0$, we may find $\o^\e \in \O$ such that $ \| \eta_{ s\we\cd} - \o^\e_{s\we \cd} \|_{p+1} \rightarrow 0$ as $\e\rightarrow 0$.  By taking $\th = (s,\o^\e)$ and $\ell = I$ (so that $\ell \in \cL_{s,s}$) in \eqref{defn2:un}, we get 
 \beaa
 u^n(s,\eta) 
 &\ge& 
 \limsup_{\e \rightarrow 0} \big\{ u(s,\o^\e) -n \| \eta_{ s\we\cd} - \o^\e_{s\we \cd} \|_{p+1}^{p+1} \big\} 
 \;\ge\;
 -\|u\|_\infty. 
 \eeaa
In the remaining case $s=0$, we may find $\o^\e \in \O$ such that $ \| \eta_0 - \o^\e_{\e \we \cd} \|_{p+1} \rightarrow 0$ as $\e \rightarrow 0$.  Then, by taking $ \th = ( \e, \o^\e)$ and $\ell |_{[0,\e]} \equiv 0$ (so that $\ell \in \cL_{\e,0}$) in \eqref{defn2:un}, we also conclude that 
\beaa
u^n(0,\eta) ~\ge ~ \limsup_{\e \rightarrow 0} \big\{ u(\e,\o^\e) -n \big(\e^{\frac{2}{3p+3}} + \|\eta_0 - \o^\e_{\e\we\cd}\|_{p+1}^{p+1} \big) \big\} ~\ge ~ -\|u\|_\infty.
\eeaa
\end{proof}

\subsection{Some properties of the regularization}

Our argument relies on using the regularization \eqref{defn2:un} for piecewise constant paths $\eta$ of the following form. Given $0=s_1 < s_2 <\cds <s_i \le T$ and $x_1,x_2,\cds,x_n \in \dbR^d$, denote:
\bea\label{def:lambda i}
 &\pi_i := (s_1,\cds,s_i),~~
  {\rm x}_i := (x_1,\cds,x_i),~
  \l_i := (\pi_i, {\rm x}_{i-1}),
 ~~\mbox{and}~~
  |{\rm x}_i |_p := \big(\sum_{j=1}^{i} |x_j|^p\big)^{\frac{1}{p}} ,&
\eea
and define the corresponding piecewise constant path:
\be\label{def:piececonstpath}
\eta^{\l_i }_s ( x ) ~ =~  \sum_{j=1}^{i-1} x_j 1_{\{s\ge s_j\}} + x 1_{\{s\ge s_i\}},
\ee
i.e. $\eta^{\l_i}( x )$ is a c\`adl\`ag piecewise constant path with $j$-th jump of size $x_j$ at time $s_j$, for $j\le i-1$, and a last jump of size $x$ at time $s_i$.

The following lemma provides an estimate on $1$-optimal $(\hat\th,\hat\ell)$ in the definition of $u^n(s,\eta)$,  in the case that $\eta = \eta^{\l_i}(x )$. For the sake of clarity, we recall the corresponding notion.

\begin{defn}
We say that $\big(\th^{\d},\ell^{\d}\big)$ is $\d$-optimal in the definition of $u^n(s,\eta)$, if
\be\label{defn:epsoptimal}
\ell^\d \in \cL_{t^\d,s}\q\mbox{and}\q 
u^n(s,\eta) - u(\th^\d) + n \F \big(s,\eta,\th^\d,\ell^\d \big)
~< ~ \d.
\ee
\end{defn}

We shall denote by $\1$ the column vector of ones with appropriate dimension, so that with the notations of \eqref{def:lambda i}, the flat tail of the path $\eta^{\l_i}(x_i)$ is given by:
 \beaa
 \eta^{\l_i}_s(x_i)
 \;=\;
 {\rm x}_i \1
 \;=\; 
 x_1+\ldots + x_i,
 ~~s\ge s_i,
 &\mbox{for all}&
 i.
 \eeaa
 
\begin{lem}\label{lem:firstestimate}
Let  $\l_i$ be as in \eqref{def:lambda i}, $x\in \dbR^d$, ${\rm x}_i:=({\rm x_{i-1}},x)$ (we slightly abuse the notation ${\rm x}_i$ so as to simplify the formulas below), and $s \in [ s_i, T]$. Let $u$ be a bounded function. Then, for a $1$-optimal point $(\hat\th,\hat\ell)$ for $u^n\big(s,\eta^{\l_i}( x)\big)$, we have:
 \be\label{estimate: L2}
 |{\rm x}_i\1-\hat\o_{\hat t}| \le Cn^{-{1\over p+1}},
 ~
 \big\|\hat\ell- I\big\|_\infty \le C n^{-\frac{3p+3}{2}},
 ~\mbox{and}~
 \big\| \eta^{\l_i}( x) - \hat\o_{\hat t \we\cd} \big\|_p
 \le 
 C\Big( n^{-\frac{1}{p+1}} + i \,n^{-\frac{3p+3}{2p}}\big| {\rm x}_i \big|_p  \Big),
 \ee
for some constant $C$ depending only on $T$ and $\|u\|_\infty$.
\end{lem}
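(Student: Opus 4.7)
The plan is to translate the $1$-optimality of $(\hat\theta,\hat\ell)$ into a uniform size bound on the penalization. Since $u$ is bounded and $\|u^n\|_\infty\le\|u\|_\infty$ by Lemma \ref{lem:uniformbound}, the definition of $1$-optimality gives
\[
n\,\Phi\bigl(s,\eta^{\lambda_i}(x),\hat\theta,\hat\ell\bigr)\;\le\;1+u(\hat\theta)-u^n\bigl(s,\eta^{\lambda_i}(x)\bigr)\;\le\;1+2\|u\|_\infty.
\]
Both summands of $\Phi$ being non-negative, one immediately reads off the second stated estimate $\|\hat\ell-I\|_\infty\le Cn^{-(3p+3)/2}$ together with the auxiliary $L^{p+1}$ bound
\[
\bigl\|\eta^{\lambda_i}(x)_{s\wedge\hat\ell(\cdot)}-\hat\omega_{\hat t\wedge\cdot}\bigr\|_{p+1}^{p+1}\;\le\;C/n.
\]
A useful by-product is $|s-\hat t|=|\hat\ell(\hat t)-\hat t|\le\|\hat\ell-I\|_\infty\le Cn^{-(3p+3)/2}$.

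For the first estimate I would exploit the hypothesis $s\ge s_i$: for $r\in(\hat t,T]$ one has $s\wedge\hat\ell(r)=s\ge s_i$ so $\eta^{\lambda_i}_s(x)=x_i\mathbf{1}$, and $\hat\omega_{\hat t\wedge r}=\hat\omega_{\hat t}$. Expanding the $L^{p+1}$-norm in the auxiliary bound directly from definition \eqref{eq:deltap} yields
\[
(T+1-\hat t)\,|x_i\mathbf{1}-\hat\omega_{\hat t}|^{p+1}+\int_0^{\hat t}\bigl|\eta^{\lambda_i}_{\hat\ell(r)}(x)-\hat\omega_r\bigr|^{p+1}\,dr\;\le\;C/n,
\]
and since $T+1-\hat t\ge 1$, dropping the positive integral gives the first estimate.

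For the third estimate I would insert $\eta^{\lambda_i}(x)_{\hat\ell(\hat t\wedge\cdot)}$ and apply the triangle inequality, writing $\|\eta^{\lambda_i}(x)-\hat\omega_{\hat t\wedge\cdot}\|_p\le A+B$ with
\[
A:=\bigl\|\eta^{\lambda_i}(x)-\eta^{\lambda_i}(x)_{\hat\ell(\hat t\wedge\cdot)}\bigr\|_p,\qquad B:=\bigl\|\eta^{\lambda_i}(x)_{\hat\ell(\hat t\wedge\cdot)}-\hat\omega_{\hat t\wedge\cdot}\bigr\|_p.
\]
For $B$, H\"older's inequality on the bounded interval $[0,T]$ converts the $L^{p+1}$-part of the auxiliary bound into an $L^p$-part, and the endpoint contribution to $\|\cdot\|_p^p$ is controlled by the first estimate, so that $B\le Cn^{-1/(p+1)}$. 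For $A$, since $\eta^{\lambda_i}(x)$ is piecewise constant with jumps of size $x_j$ at $s_j$, the integrand $\eta^{\lambda_i}_r(x)-\eta^{\lambda_i}_{\hat\ell(\hat t\wedge r)}(x)$ vanishes outside ``bad windows'' on which $r$ and $\hat\ell(\hat t\wedge r)$ lie on opposite sides of some jump-time $s_j$. Using $\|\hat\ell-I\|_\infty\le Cn^{-(3p+3)/2}$, there are at most $i$ such windows, each of length $\le Cn^{-(3p+3)/2}$, on which the difference has magnitude $|x_j|$; plus, when $\hat t<s_i$, a tail interval $(\hat t,s_i]$ of length $\le|s-\hat t|\le Cn^{-(3p+3)/2}$ on which the difference is bounded by $\sum_k|x_k|\le i^{1-1/p}|x_i|_p$ by H\"older. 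Summing $L^p$-contributions gives $A^p\le Cn^{-(3p+3)/2}\bigl(|x_i|_p^p+i^{p-1}|x_i|_p^p\bigr)$, whence $A\le Ci\,n^{-(3p+3)/(2p)}|x_i|_p$.

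The main technical obstacle is the careful bookkeeping of these bad windows and the residual tail $(\hat t,s_i]$ that appears when $\hat t<s_i$; the scheme succeeds because all of these bad sets share the common width $\|\hat\ell-I\|_\infty$, and the exponent $(3p+3)/(2p)$ in the third estimate arises from a single $p$-th root of that bound.
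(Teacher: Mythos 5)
Your proof is correct and mirrors the paper's: extract the uniform $C/n$ bound on the penalization $\Phi$ from $1$-optimality and boundedness, read off the first two estimates directly (using that the endpoint weight $T+1-\hat t\ge 1$), and split the third estimate by the triangle inequality into $A+B$, with $B$ handled by H\"older together with the $L^{p+1}$-bound and $A$ handled via the piecewise-constant structure of $\eta^{\lambda_i}(x)$ together with $\|\hat\ell-I\|_\infty$. One small caveat in your bound on $A$: the ``bad windows'' attached to distinct jump-times $s_j$ may overlap (so the integrand there is not merely $|x_j|$ but up to $\sum_k|x_k|$); this is handled cleanly, exactly as in the paper, by applying the power-mean (or Minkowski) inequality $|\sum_{j=1}^i a_j|^p\le i^{p-1}\sum_{j=1}^i|a_j|^p$ across the $i$ jump components before integrating, which yields the factor $i$ in the final estimate without any disjointness assumption on the windows and automatically absorbs the tail interval $(\hat t,s_i]$ as well.
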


%{\color{red}(The notation ${\rm x}_i$ is abused for the last term, $x_i$ or $x$? In Lemma 5.6 it is also $x$, so probably use $x$.)}

\begin{proof}
Set $\eta:=\eta^{\l_i}( x)$. By the uniform bound on $u$ and $u^n$ in Lemma \ref{lem:uniformbound}, it follows from \eqref{eq:stopint} and \eqref{defn:epsoptimal} that
 \bea\label{eq: easyestimate}
 &\big\|\hat\ell- I\big\|_\infty^{2\over 3p+3}
 \;\vee\;
 |\eta_s - \hat\o_{\hat t}|^{p+1}
 \;\vee\;
  \int_0^{\hat t}| \eta_{\hat\ell(t)} - \hat\o_t |^{p+1}dt
  ~\le ~
  \frac{C_0}{n}~:=~\frac{1+2|u|_\infty}{n}.&
 \eea
Since $\eta_s={\rm x}_i\1$, this provides the first two required estimates.

By the Minkowski inequality, the H\"older inequality, and \eqref{eq:stopint}, we have
 \bea
 \big\| \eta - \hat\o_{\cd\we \hat t} \big\|_p
 &\le& 
 \big\| \eta - \eta_{\hat\ell(.\wedge \hat t)} \big\|_p
 +\big\| \eta_{\hat\ell(.\wedge \hat t)}- \hat\o_{\cd \we \hat t} \big\|_p
 \nonumber\\
 &\le& 
 \big\| \eta - \eta_{\hat\ell(.\wedge \hat t)} \big\|_p
 +T^{\frac{1}{p(p+1)}}\big\| \eta_{\hat\ell(.\wedge \hat t)}- \hat\o_{\cd\we \hat t} \big\|_{p+1}
 \nonumber\\
 &=&
 \big\| \eta - \eta_{\hat\ell(.\wedge\hat t)} \big\|_p
 +T^{\frac{1}{p(p+1)}} \Big(\int_0^{\hat t} \big| \eta_{\hat\ell(t)} - \hat\o_{t} \big|^{p+1} dt
                                          + (T-\hat t) \big| \eta_s - \hat\o_{\hat t} \big|^{p+1}
                                   \Big)^{\frac{1}{p+1}}
 \nonumber\\
 &\le&
 \big\| \eta - \eta_{\hat\ell(.\wedge\hat t)} \big\|_p
 +
 \Big((1+T)^2\frac{C_0}{n}\Big)^{\frac{1}{p+1}},
 \label{eq: L2minkow}
 \eea
where the last inequality follows from \eqref{eq: easyestimate}. In the special case $s = s_i = 0$, we have $\eta\equiv \eta_0$ and $\hat\ell |_{[0,\hat t]}\equiv 0$, and we therefore get $\big\| \eta - \eta_{\hat\ell(.\wedge\hat t)} \big\|_p=0$. Otherwise, in the case $s > 0$, denoting $x_i:=x$, we have
 \beaa
 \big\| \eta - \eta_{\hat\ell(.\wedge\hat t)} \big\|_p^p
 &=&
 \int_0^T \Big| \sum_{j=1}^{i} x_j \big(1_{\{t\ge s_j \}} -1_{\{\hat\ell(t \we\hat t)\ge  s_j \}}\big)
               \Big|^p dt 
 \\
 &\le &
 i^p \Big(\sum_{j=1}^i  |x_j|^p \int_0^T \big| 1_{\{t \ge s_j \}} -1_{\{\hat\ell(t \we\hat t)\ge s_j \}}\big| dt
      \Big)
 \\
 &\le &
 i^p \Big(\sum_{j=1}^i  |x_j|^p \big| s_j - {\hat\ell}^{-1}(s_j)\big| 
      \Big)
 ~\le~
 i^p |{\rm x}_i|_p^p \|\hat\ell - I \|_\infty 
 ~\le ~ 
 i^p |{\rm x}_i|_p^p \Big(\frac{C_0}{n}\Big)^{\frac{3p+3}{2}}
 \eeaa
by using again \eqref{eq: easyestimate}. The third required estimate is obtained by plugging the last inequality into \eqref{eq: L2minkow}.
\end{proof}

The previous lemma leads to:

\begin{lem}\label{lem:ppt-rgl}
Let $u$ be bounded and $d_p-$uniformly continuous. Then, $\lim_{n\rightarrow\infty} u^{n}({\bf 0}) = u({\bf 0})$.
\end{lem}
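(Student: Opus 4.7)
The claim splits into $u^n({\bf 0}) \ge u({\bf 0})$ (uniformly in $n$) and $\limsup_n u^n({\bf 0}) \le u({\bf 0})$, which I would handle separately.

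For the lower bound, I would probe the supremum in \eqref{defn2:un} with the admissible family $\th_\eps := (\eps, 0) \in \Th \setminus \{{\bf 0}\}$, where $0$ denotes the zero path in $\O$, together with the unique $\ell^{\eps,0} \in \cL_{\eps, 0}$. Since $s=0$ and $\eta = 0$, the $\|\cdot\|_{p+1}^{p+1}$ term of $\F$ vanishes, and a direct computation yields $\|\ell^{\eps,0} - I\|_\infty = \eps$, hence $\F(0,0,\th_\eps,\ell^{\eps,0}) = \eps^{2/(3p+3)}$. Therefore
$$u^n({\bf 0}) ~\ge~ u(\eps, 0) - n\,\eps^{2/(3p+3)},$$
and sending $\eps \to 0$ together with the $d_p$-continuity of $u$ at ${\bf 0}$ (note $d_p(\th_\eps,{\bf 0}) = \eps$) delivers $u^n({\bf 0}) \ge u({\bf 0})$.

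For the upper bound, I would fix a sequence $\d_n \downarrow 0$ with $\d_n\le 1$ and pick correspondingly $\d_n$-optimal pairs $(\hat\th_n, \hat\ell_n)$ with $\hat\th_n = (\hat t_n, \hat\o_n)$ in the definition of $u^n({\bf 0})$. Lemma \ref{lem:firstestimate}, applied with $i = 1$, $\pi_1 = (0)$ and $x = 0$ (so that $\eta^{\l_1}(0) \equiv 0$ coincides with the target path), supplies the three rates of \eqref{estimate: L2}. Because $s=0$ forces $\hat\ell_n = \ell^{\hat t_n, 0}$ and thus $\|\hat\ell_n - I\|_\infty = \hat t_n$, while $|{\rm x}_1|_p = 0$ collapses the last bound, these estimates simplify to
$$\hat t_n ~\le~ C n^{-\frac{3p+3}{2}} \q\mbox{and}\q \|\hat\o_{n,\hat t_n\we\cdot}\|_p ~\le~ C n^{-\frac{1}{p+1}},$$
which together give $d_p(\hat\th_n, {\bf 0}) \to 0$. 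Combining the $d_p$-continuity of $u$ with the $\d_n$-optimality then yields
$$u^n({\bf 0}) ~<~ u(\hat\th_n) + \d_n - n\F_n ~\le~ u(\hat\th_n) + \d_n ~\longrightarrow~ u({\bf 0}),$$
which closes the upper bound.

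The one minor technicality is that Lemma \ref{lem:firstestimate} is phrased for $1$-optimal points, whereas I would invoke it for $\d_n$-optimal ones with $\d_n\le 1$; however, the argument rests solely on the bound $n\F \le 1 + 2\|u\|_\infty$ of \eqref{eq: easyestimate}, which persists in identical form for any tolerance in $(0,1]$, so no change is needed. I do not anticipate any serious obstacle: once Lemma \ref{lem:firstestimate} is specialised to the degenerate data $s=0$, $\eta = 0$, its three estimates pin $\hat\th_n$ to ${\bf 0}$ quantitatively, and uniform $d_p$-continuity of $u$ finishes the argument.
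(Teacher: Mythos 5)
Your proof is correct and follows essentially the same route as the paper's: both establish the lower bound by testing the supremum with $(\e,0)$ and the degenerate $\ell\in\cL_{\e,0}$, and both obtain the upper bound by taking near-optimal $(\hat\th,\hat\ell)$, invoking Lemma \ref{lem:firstestimate} with the degenerate data $i=1$, ${\rm x}_1=0$ to force $d_p(\hat\th,{\bf 0})\to 0$, and then using the uniform $d_p$-continuity of $u$. Your remark that the constant in Lemma \ref{lem:firstestimate} is uniform over tolerances in $(0,1]$ is exactly the point the paper relies on when it observes that $\d_n$ does not depend on $\e$.
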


\begin{proof}
First, we clearly have
\beaa
u^{n}({\bf 0}) 
~\ge ~  \limsup_{\e\rightarrow 0} \big( u(\e,0) -n \e  \big) 
~=~  u ({\bf 0}).
\eeaa

On the other hand, for $\e<1$, choose an $\e$-optimal $(\hat\th,\hat\ell)$ in the definition of $u^n(0,0) $. It follows from Lemma \ref{lem:firstestimate} that
 \beaa
 d_p\big({\bf 0}, \hat\th\big) 
 ~=~ |\hat t| + \|\hat\o_{\hat t\we\cd}\|_p
 ~\le~ 
 C\Big( n^{-\frac{3p+3}{2}} + n^{-\frac{1}{p+1}} \Big) 
 ~ =: ~ 
 \d_n,
 \eeaa
where $C$ is a constant independent of $\e$, and thus $\d_n$ does not depend on $\e$. Then
 \beaa
 \big| (u^{n}- u)({\bf 0}) \big|  
 &=& 
 (u^{n} - u)({\bf{0}}) 
 ~\le~ 
 \e +  u(\hat\th) - u({\bf{0}}) 
 \\
 &\le& 
 \e+ \sup_{d_p(\th, {\bf{0}}) \le \d_n}\big(u(\th) - u({\bf 0})\big)
 ~\le~ 
 \e+\rho^u(\d_n) ~\longrightarrow~ \rho^u(\d_n),
 \eeaa
by the uniform continuity of $u$. Since $\delta_n\longrightarrow 0$, this shows that $\lim_{n\rightarrow\infty} u^{n}({\bf 0}) = u({\bf 0})$.
\end{proof}

\subsection{A finite-dimensional regularization}

For any $\l_i$ as in \eqref{def:lambda i}, we now introduce the finite-dimensional function
 \bea\label{def:ulambda}
 u^{n,\l_i}(s,x) 
 &:=& 
 u^n\big(s,\eta^{\l_i}(x)\big),
 ~~(s,x) \in [ s_i ,T]\times \dbR^d.
 \eea
In this subsection, we explore the regularity of this function for fixed $\l_i$.

Notice that, for any $s,\l_i,x$, there exists a sequence $(\o^\e)_\e\subset\O$ such that $\|\eta^{\l_i}_{s\we\cd}(x) - \o^\e_{s\we\cd}\|_p\rightarrow 0$. Then, since $u,v,G$ are assumed to be uniformly continuous on $\Th$, these functions have natural extensions for such c\`ad-l\`ag paths. Similar to \eqref{def:ulambda}, we denote
  \beaa
 u^{\l_i}(s,x) 
 &:=& 
 u\big(s,\eta^{\l_i}(x)\big),
 ~~(s,x) \in [ s_i ,T]\times \dbR^d.
 \eeaa
Our first result provides an estimate of the deviation of the penalization at the final time $T$.

\begin{lem}\label{lem:un(T)}
Let $u$ be bounded and $d_p-$uniformly continuous. Then, for  $\l_i$ as in \eqref{def:lambda i}, $x\in \dbR^d$, and ${\rm x}_i:=({\rm x_{i-1}},x)$, we have 
 \beaa
 \rho_n 
 \;:=\; 
 \sup \big\{\big| (u^{n,\l_i} - u^{\l_i} )(T,x)\big| : 
                 i \le n^{1+\frac{1}{5p}},
                 ~|{\rm x}_i |_p \le n^{\frac{1}{2}+\frac{6}{5p}}
         \big\} 
 &\longrightarrow& 0~~\mbox{as}~~ n\to\infty.
 \eeaa
\end{lem}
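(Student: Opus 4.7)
The plan is to establish matching one-sided bounds
$$u^{\l_i}(T,x)\;\le\;u^{n,\l_i}(T,x)\;\le\;u^{\l_i}(T,x)+\rho^u(\d_n),$$
for some $\d_n\rightarrow 0$ that is independent of $(\l_i,x)$ ranging over the admissible set, where $\rho^u$ denotes the modulus of $d_p$-uniform continuity of $u$. Combining the two sides immediately gives $\rho_n\le\rho^u(\d_n)\rightarrow 0$.

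For the lower bound, fix $\eta:=\eta^{\l_i}(x)$. Since $\eta$ is a c\`adl\`ag piecewise constant path extendable from $\O$ by density, I may choose $\o^\e\in\O$ with $\o^\e_0=0$ such that $\|\o^\e_{T\we\cdot}-\eta_{T\we\cdot}\|_{p+1}\rightarrow 0$ and $|\o^\e_T-\eta_T|\rightarrow 0$ as $\e\rightarrow 0$. Plugging $\th=(T,\o^\e)$ and $\ell=I\in\cL_{T,T}$ into the supremum defining $u^n(T,\eta)$ gives $\|\ell-I\|_\infty=0$, $\F(T,\eta,(T,\o^\e),I)\rightarrow 0$, and hence $u^n(T,\eta)\ge u(T,\o^\e)-n\F\rightarrow u(T,\eta)$ by $d_p$-continuity.

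For the upper bound, fix $\e\in(0,1)$ and pick an $\e$-optimal pair $(\hat\th,\hat\ell)$ in the sense of \eqref{defn:epsoptimal} for $u^n(T,\eta)$. Lemma~\ref{lem:uniformbound} gives $\|u^n\|_\infty\le\|u\|_\infty$, so from \eqref{defn:epsoptimal}, $n\F\le u(\hat\th)-u^n(T,\eta)+\e\le 2\|u\|_\infty+1$, and Lemma~\ref{lem:firstestimate} then produces
$$|\hat t-T|\;\le\;\|\hat\ell-I\|_\infty\;\le\;Cn^{-(3p+3)/2},\qquad \|\eta-\hat\o_{\hat t\we\cdot}\|_p\;\le\;C\bigl(n^{-1/(p+1)}+i\,n^{-(3p+3)/(2p)}|{\rm x}_i|_p\bigr).$$
Under the constraints $i\le n^{1+1/(5p)}$ and $|{\rm x}_i|_p\le n^{1/2+6/(5p)}$, the exponent of the worst cross term is
$$\textstyle 1+\frac{1}{5p}-\frac{3p+3}{2p}+\frac{1}{2}+\frac{6}{5p}\;=\;-\frac{1}{10p}\;<\;0,$$
so the second estimate is bounded by $Cn^{-1/(10p)}$. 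Therefore $d_p(\hat\th,(T,\eta))\le\d_n$ with $\d_n\rightarrow 0$ depending neither on $\e$ nor on $(\l_i,x)$ in the admissible set. Consequently
$$u^n(T,\eta)\;\le\;u(\hat\th)+\e\;\le\;u(T,\eta)+\rho^u(\d_n)+\e,$$
and letting $\e\rightarrow 0$ concludes.

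The only step requiring care is the exponent bookkeeping: the growth rates $n^{1+1/(5p)}$ and $n^{1/2+6/(5p)}$ are tailored precisely so that the cross term $i\,n^{-(3p+3)/(2p)}|{\rm x}_i|_p$ from Lemma~\ref{lem:firstestimate} still vanishes. Beyond this arithmetic, the proof uses only Lemmas~\ref{lem:uniformbound}--\ref{lem:firstestimate} together with the uniform $d_p$-continuity of $u$.
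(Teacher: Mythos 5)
Your argument matches the paper's proof step for step: the lower bound by inserting $\th=(T,\o^\e)$ with $\ell=I$, the upper bound via an $\e$-optimal pair and Lemma~\ref{lem:firstestimate}, and the same exponent check $1+\tfrac{1}{5p}+\tfrac12+\tfrac{6}{5p}-\tfrac{3p+3}{2p}=-\tfrac{1}{10p}$ to make the cross term vanish. The proposal is correct and takes essentially the same route as the paper's proof.
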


\begin{proof} 
Set $\eta : = \eta^{\l_i}(x)$, and choose a sequence of $(\o^\e)_\e\subset \O$ such that $\int_0^{T+1} | \eta_{t}- \o^\e_{t} |^{p+1} dt \longrightarrow 0$, as $\e\to 0$. Then, we clearly have
 \beaa
 u^{n,\l_i}(T,x)
 \;:=\;
 u^{n}(T,\eta ) 
 &\ge& 
 \limsup_{\e\rightarrow 0} u(T, \o^\e) 
 \;=\;
 u^{\l_i}(T,x),
 \eeaa
where the last equality follows from the $d_p$-continuity of $u$.

On the other hand, choosing an $\e$-optimal $(\hat\th,\hat\ell)$ in the definition of $u^n(T, \eta)$, it follows from Lemma \ref{lem:firstestimate} that
 \beaa
 d_p\big((T,\eta), \hat\th\big) 
 &=& 
 |T-\hat t| + \| \eta - \hat\o_{\hat t\we\cd}\|_p  
 \\
 &\le& 
 C\Big( n^{-\frac{3p+3}{2}} + n^{-\frac{1}{p+1}} + i |{\rm x}_i|_{p} n^{-\frac{3p+3}{2p}} \Big)
 \\
 &\le& 
 C\Big( n^{-\frac{3p+3}{2}} + n^{-\frac{1}{p+1}} + n^{1+\frac{1}{5p}} n^{\frac{1}{2}+\frac{6}{5p}} n^{-\frac{3p+3}{2p}} \Big)
 \\
 &\le& 
 C\Big( n^{-\frac{3p+3}{2}} + n^{-\frac{1}{p+1}} + n^{-\frac{1}{10p}} \Big)
 ~ =: ~ \d'_n.
 \eeaa
 %{\color{red} The notation $\d_n$ is already used in Lemma 5.5. But it plays the same role there, so makes sense to abuse the notation if we want. We may mention it though that we are abusing the notation $\d_n$.)}
For the second inequality, we used the constraints $i \le n^{1+\frac{1}{5p}}$, and $|{\rm x}_i|_{p} \le n^{\frac{1}{2}+\frac{6}{5p}}$. Then,
 \beaa
 \big(u^{n,\l_i} - u^{\l_i}\big)(T,x) 
 &\le& 
 \e+ u(\hat\th) - u(T,\eta) \\
 &\le& 
 \e+ \sup_{d_p((T,\eta), \th)\le \d'_n}\big( u(\th) - u(T,\eta)\big)\\
 &\le& 
 \e + \rho^u(\d'_n) ~\longrightarrow ~\rho^u(\d'_n), 
 ~~\mbox{as}~~\e\rightarrow 0.
 \eeaa
Hence, we have $0\le \big(u^{n,\l_i} - u^{\l_i}\big)(T,x) \le \rho^u(\d'_n)$, and the required result follows from the fact that $\d'_n\longrightarrow 0$ as $n\to\infty$.
\end{proof}

We next analyze the regularity of the finite-dimensional regularization.

\begin{lem}
The function $u^{n,\l_i}$ is:
\\
{\rm (i)}\quad $\frac{2}{3p+3}-$H\"older continuous in $s\in(s_i,T]$, uniformly in $x\in\dbR^d$,
\\
{\rm (ii)}\quad locally Lipschitz-continuous in $x\in\dbR^d$, uniformly in $s\in(s_i,T]$,
\\
{\rm (ii)}\quad lower semicontinuous at the points $(s_i,x)$, $x\in\dbR^d$, i.e. $u^{n,\l_i} (s_i,x) \le \liminf_{s' \downarrow s_i, x'\to x} u^{n,\l_i}(s',x')$.
\end{lem}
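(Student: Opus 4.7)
The proof of all three parts rests on building, from an $\epsilon$-optimal pair $(\hat\theta,\hat\ell)$ for the source value of $u^{n,\lambda_i}$, an explicit competitor $(\tilde\theta,\tilde\ell)$ for the target value and estimating the resulting change in $\Phi$.

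For (i), I fix $s,s'\in(s_i,T]$ and take an $\epsilon$-optimal $(\hat\theta,\hat\ell)$ for $u^{n,\lambda_i}(s,x)$. Since $s>s_i$ and $\hat\ell\in\cL_{\hat t,s}$, the point $\bar r:=\hat\ell^{-1}(s_i)$ is well-defined with $\bar r<\hat t$. I will keep $\tilde\theta=\hat\theta$ and define $\tilde\ell\in\cL_{\hat t,s'}$ by $\tilde\ell=\hat\ell$ on $[0,\bar r]$ and by linear interpolation from $s_i$ to $s'$ on $[\bar r,\hat t]$ (monotone since $s'>s_i$). The key observation is that on $[\bar r,\hat t]$ both $\hat\ell(r)$ and $\tilde\ell(r)$ take values in $[s_i,\max(s,s')]$, where the piecewise-constant path $\eta^{\lambda_i}(x)$ has already reached its terminal value $\sum_{j=1}^i x_j$; hence $\eta^{\lambda_i}_{\hat\ell(r)}(x)=\eta^{\lambda_i}_{\tilde\ell(r)}(x)$ on this set, and since also $\eta^{\lambda_i}_{s'}(x)=\eta^{\lambda_i}_{s}(x)$, the $\dbL^{p+1}$-part of $\Phi$ is unchanged. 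Using the identities $|\bar r-s_i|=|\bar r-\hat\ell(\bar r)|\le\|\hat\ell-I\|_\infty$ and $|\hat t-s|=|\hat t-\hat\ell(\hat t)|\le\|\hat\ell-I\|_\infty$ together with the linearity of $\tilde\ell$, I get $\|\tilde\ell-I\|_\infty\le\|\hat\ell-I\|_\infty+|s-s'|$, and subadditivity of $z\mapsto z^{2/(3p+3)}$ yields $\Phi(s',\eta^{\lambda_i}(x),\hat\theta,\tilde\ell)-\Phi(s,\eta^{\lambda_i}(x),\hat\theta,\hat\ell)\le|s-s'|^{2/(3p+3)}$. Sending $\epsilon\to 0$ and symmetrizing then delivers $|u^{n,\lambda_i}(s,x)-u^{n,\lambda_i}(s',x)|\le n|s-s'|^{2/(3p+3)}$, whose constant is manifestly independent of $x$.

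For (ii), I will keep the same $(\hat\theta,\hat\ell)$ as a competitor at both $(s,x)$ and $(s,x')$. Since $\eta^{\lambda_i}(x')-\eta^{\lambda_i}(x)=(x'-x)\mathbf 1_{\{\cdot\ge s_i\}}$, the $\dbL^{p+1}$-norm of the perturbation is bounded by $(T+1)^{1/(p+1)}|x-x'|$. Lemma \ref{lem:firstestimate} bounds the base $\dbL^{p+1}$-norm at a $1$-optimizer by $Cn^{-1/(p+1)}$, and the elementary inequality $(a+b)^{p+1}-a^{p+1}\le C_p(a^p+b^p)b$ turns this into $n[\Phi(s,\eta^{\lambda_i}(x'),\hat\theta,\hat\ell)-\Phi(s,\eta^{\lambda_i}(x),\hat\theta,\hat\ell)]\le Cn^{1/(p+1)}|x-x'|+Cn|x-x'|^{p+1}$, which is locally Lipschitz in $x$ uniformly in $s$. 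For (iii), I combine the ideas of (i) and (ii): starting from $(\hat\theta,\hat\ell)$ $\epsilon$-optimal at $(s_i,x)$ with $s_i>0$, I set $\tilde t=\hat t+(s'-s_i)$, let $\tilde\omega$ equal $\hat\omega$ on $[0,\hat t]$ and the constant $\hat\omega_{\hat t}$ on $(\hat t,T]$, and take $\tilde\ell=\hat\ell$ on $[0,\hat t]$ followed by the slope-one affine piece on $[\hat t,\tilde t]$ joining $s_i$ to $s'$. A direct computation shows $\|\tilde\ell-I\|_\infty=\|\hat\ell-I\|_\infty$ and $d_p(\tilde\theta,\hat\theta)=|s'-s_i|$, while the $\dbL^{p+1}$-part of $\Phi(s',\eta^{\lambda_i}(x'),\tilde\theta,\tilde\ell)$ depends on $x'$ only through the boundary term $|\eta^{\lambda_i}_{s_i}(x')-\hat\omega_{\hat t}|^{p+1}$ (because $\eta^{\lambda_i}_r$ is constant for $r\ge s_i$), and hence converges as $x'\to x$. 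In the degenerate case $s_i=0$, where $\hat\ell\equiv 0$ on $[0,\hat t]$ would force the preceding $\tilde\ell$ to be non-injective, I use instead the direct linear choice $\tilde\ell(r)=s'r/\hat t$ together with $\tilde\theta=\hat\theta$: exploiting $\eta^{\lambda_1}(x)\equiv x$, the $\dbL^{p+1}$ piece depends only on $(\hat\theta,x')$ and converges to its $x$-value, while $\|\tilde\ell-I\|_\infty=|s'-\hat t|\to\hat t=\|\hat\ell-I\|_\infty$. In either case $u^{n,\lambda_i}(s',x')\ge u(\tilde\theta)-n\Phi\to u^{n,\lambda_i}(s_i,x)-\epsilon$, whence the lower semicontinuity.

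The main technical obstacle is the construction of $\tilde\ell$ in (i). A naive rescaling such as $\tilde\ell=(s'/s)\hat\ell$ on $[0,\hat t]$ would force $\tilde\ell(r)$ to cross the intermediate jump times $s_2,\ldots,s_{i-1}$ of $\eta^{\lambda_i}$, producing an error term proportional to $|{\rm x}_i|$ that spoils the required uniformity in $x$. Restricting the modification of $\hat\ell$ to the terminal segment $[\bar r,\hat t]$, where the path has already reached its final value, isolates the perturbation from every jump and delivers the $x$-independent H\"older bound.
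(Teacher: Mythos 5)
Your proof is correct and follows essentially the same strategy as the paper's: in each part you take an $\epsilon$-optimal pair $(\hat\theta,\hat\ell)$ at the source and build a competitor at the target by leaving $\hat\ell$ untouched on $[0,\hat\ell^{-1}(s_i)]$ (where the piecewise-constant path still moves) and modifying it only past $\hat\ell^{-1}(s_i)$ (where the path is flat), which is precisely how the paper isolates the perturbation from the jump times. The only differences are cosmetic: in (i) the paper uses an affine rescaling $\hat\ell'(t)=s_i+\tfrac{s'-s_i}{s-s_i}(\hat\ell(t)-s_i)$ together with the bound $\sup_{t\le\hat t}|\hat\ell-\hat\ell'|\le|s-s'|$ and subadditivity of $z\mapsto z^{2/(3p+3)}$, whereas you linearly interpolate in the time variable and bound $\|\tilde\ell-I\|_\infty\le\|\hat\ell-I\|_\infty+|s-s'|$ directly; and in (iii) with $s_i>0$ the paper shifts the end time by an auxiliary $\epsilon$ and uses $u(\hat t+\epsilon,\hat\omega_{\hat t\wedge\cdot})$ with a $\rho^u(\epsilon)$ correction before sending $\epsilon\to 0$, while you shift directly by $s'-s_i$ and invoke $\rho^u(s'-s_i)\to 0$. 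Both routes yield the same estimates.
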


\begin{proof}
{\bf 1.}\q We first consider $s,s' \in (s_i,T]$, and we estimate the value of $u^{n,\l_i} (s,x)-u^{n,\l_i} (s',x')$ for $x , x' \in\dbR^d$. Choose an $\e$-optimal $(\hat\th,\hat\ell)$ in the definition of $u^n \big(s,\eta^{\l_i}(x)\big) = u^{n,\l_i} (s,x)$. Given this $\hat\ell\in\cL_{\hat t,s}$, we define $\hat\ell' \in \cL_{\hat t,s'}$ by its values on $[0,\hat t]$ (see Remark \ref{rem:ell}):
 \beaa
 \hat\ell'(t) 
 &:=&
 \hat\ell(t)\1_{[0,\hat\ell^{-1}(s_i)]}(t)
 + \Big(s_i + \frac{s'-s_i}{s-s_i}(\hat\ell(t) -s_i)\Big)\1_{(\hat\ell^{-1}(s_i), t^*]}(t),
 ~~t\in [0,\hat t].
 \eeaa 
In particular, we observe that
\be\label{eq:ll'}
(\hat\ell')^{-1}(s_i) = \hat\ell^{-1}(s_i),
\ee
and
\bea
 \sup_{t \le \hat t} | \hat\ell_t -\hat\ell'_t | 
 &\le& 
 \sup_{\hat\ell^{-1}(s_i)\le t \le \hat t} \big| s_i + (\hat\ell(t) - s_i)\frac{s'-s_i}{s-s_i} - \hat\ell(t) \big| 
 \nonumber\\
 &\le& 
 \sup_{\hat\ell^{-1}(s_i)\le t \le \hat t} \big(\hat\ell(t) -s_i\big) \frac{|s-s'|}{s-s_i} 
 \;\le\;  
 |s -s' |.
 \label{diff:ll'}
 \eea
Then we have
 \bea\label{diff:u}
 u^{n,\l_i} (s,x)-u^{n,\l_i} (s',x') 
 &\le& 
 \e + n \Big(  \F \big(s', \eta^{\l}(x'),\hat\th,\hat\ell' \big) -  \F \big(s,\eta^{\l_i}(x), \hat\th, \hat\ell\big)  \Big).
 \eea
It follows from \eqref{diff:ll'} that
 \bea\label{diff:lnorm}
 \|\hat\ell' - I \|_\infty^{\frac{2}{3p+3}}  - \|\hat\ell - I\|_\infty^{\frac{2}{3p+3}} 
 &\le& 
 \sup_{t \le\hat t}|\hat\ell_t - \hat\ell'_t|^{\frac{2}{3p+3}}
 \;\le\; |s -s' |^{\frac{2}{3p+3}}.
 \eea
Moreover, using \eqref{eq:ll'}, we directly estimate that: 
 \bea\label{eq:repeat calculus0}
&&
 \| \eta^{\l_i} (x')_{\hat\ell(\hat t\we\cd)} - \hat\o_{\hat t\we\cd} \|_{p+1}^{p+1} 
 - \| \eta^{\l_i}(x)_{\hat\ell(\hat t\we\cd)} - \hat\o_{\hat t\we\cd} \|_{p+1}^{p+1}  
 \nonumber\\
 &\le & 
 \int_{\hat\ell^{-1}(s_i)}^{T+1}|\sum_{j=1}^{i-1}x_j+x' -\o^*_{t\we t^*}|^{p+1} dt 
 - \int_{\hat\ell^{-1}(s_i)}^{T+1}|\sum_{j=1}^{i-1}x_j+x-\o^*_{t\we t^*} |^{p+1} dt 
 \nonumber\\
 &\le &
 (p+1) |x-x'| \int_{0}^{T+1}  \big( |x-x'|+ | \sum_{j=1}^{i-1}x_j+x-\hat\o_{t \we \hat t}  | \big)^p dt.
 \label{eq:repeat calculus}
 \eea
In view of the control on $\|\hat\o_{\hat t\we\cd}\|_p \le  C$ from Lemma \ref{lem:firstestimate}, this provides the statements (i) and (ii) by plugging \eqref{diff:lnorm} and \eqref{eq:repeat calculus0} into \eqref{diff:u}.
\\
\no {\bf 2.}\q We next estimate the difference of $u^{n,\l_i} (s_i,x)-u^{n,\l_i} (s',x')$ for $s'\in (s_i,T]$. We consider two alternative cases.
\\
{\it \underline{Case 1}: $i>1$.} Then, $s_i >0$. Choose an $\e$-optimal $(\hat\th,\hat\ell)$ in the definition of $ u^{n,\l_i} (s_i,x)$. Given this $\hat\ell\in\cL_{\hat t,s_i}$, we define $\hat\ell' \in \cL_{\hat t+\e,s'}$ by:
 \beaa 
 \hat\ell'(t) 
 :=
 \hat\ell(t) \q\mbox{for}~~ t \le \hat\ell^{-1}(s_i) = \hat t,
 &\mbox{and}&
 \hat\ell'~\mbox{linear on}~[\hat t,\hat t+\e].
 \eeaa
In particular, we note that $(\hat\ell')^{-1}(s_i) = \hat\ell^{-1}(s_i)$.  Then we have
\beaa
u^{n,\l_i} (s_i,x)-u^{n,\l_i} (s',x') 
& \le & 
\e + u(\hat\th) - u(\hat t+\e, \hat\o_{\hat t\we\cd})\\
& & +n \Big( \F\big(s',\eta^{\l_i}(x'),\hat t+\e,\hat\o_{\hat t\we\cd}, \ell'\big) 
-\F \big( s_i, \eta^{\l_i}(x), \hat\th, \hat\ell \big)  \Big).
\eeaa
Note that
 \beaa
 \|\hat\ell' - I\|_\infty^{\frac{2}{3p+3}}  - \|\hat\ell - I\|_\infty^{\frac{2}{3p+3}} 
 &\le& 
 \big( \|\hat\ell - I \|_\infty \vee |s' - \hat t-\e|\big)^{\frac{2}{3p+3}} -\|\hat\ell - I\|_\infty^{\frac{2}{3p+3}} 
 \\
 &\le& 
 \Big( \max \big\{ 0, |s' - \hat t-\e| -  \|\hat\ell - I_{\hat t}\|_\infty \big\} \Big)^{\frac{2}{3p+3}} 
 \\
 &\le& 
 \Big( \max \big\{ 0, |s' -\hat t-\e| - | s_i -\hat t | \big\} \Big)^{\frac{2}{3p+3}}
 \\
 &\le& 
 \big( | s_i -s' | +\e \big)^{\frac{2}{3p+3}}.
 \eeaa
Further, by following the line of calculation in \eqref{eq:repeat calculus}, we obtain that for all $|x|,|x'|\le R$ there is a constant $C$ (dependent on $R$, but independent of $n$) such that
 \beaa
 u^{n,\l_i} (s_i, x)-u^{n,\l_i} (s',x') 
 &\le & 
 \e + \rho^u (\e)+ n\big( (|s_i-s'|+\e)^{\frac{2}{3p+3}}  + C|x-x'| \big) 
 \\
 &\longrightarrow & 
 n(|s_i-s'|^{\frac{2}{3p+3}} + C|x-x'|).
 \eeaa
This implies that (iii) holds in the present.
\\
\ms 
{\it \underline{Case 2}: $i=1$.} Then, $s_i =0$ and $\eta^{\l_i}(x)\equiv x$. Choose an $\e$-optimal $(\hat\th,\hat\ell)$ in the definition of $ u^{n,\l_i} (0 ,x)$. Since $\hat\ell\in\cL_{\hat t,0}$, we have $\hat\ell|_{[0,\hat t]}\equiv 0$. Assume $\e < s'$, and define $\ell' \in \cL_{\hat t+\e,s'}$:
 \beaa 
 \hat\ell'(t) 
 &:=&
 \e \frac{t}{\hat t}\1_{[0,\hat t]}(t)
 +\big(\hat t+\e - t + s' \frac{t-\hat t}{\e}\big)\1_{(\hat t,\hat t+\e]}(t),
 ~~t\in[0,\hat t+\e].
 \eeaa
Then we have
 $$
 u^{n,\l_i} (0,x)-u^{n,\l_i} (s',x') 
 \le 
 \e + u(\hat\th) - u(\hat t+\e, \hat\o_{\hat t\we\cd}) 
+ n\Big( \F\big(s',x',\hat t+\e, \hat\o_{\hat t\we\cd},\hat\ell' \big) 
             - \F\big(0,x,\hat\th,\hat\ell \big) \Big).
 $$
Note that
 \beaa
 \|\hat\ell' - I \|_\infty^{\frac{2}{3p+3}}  - \|\hat\ell - I \|_\infty^{\frac{2}{3p+3}} 
 &\le& 
 \big( |\hat t - \e| \vee |s' -\hat t-\e|\big)^{\frac{2}{3p+3}} 
 - |\hat t|^{\frac{2}{3p+3}} 
 \\
 &\le& 
 \Big( \max \big\{ \e, |s' - \hat t-\e| -  |\hat t| \big\} \Big)^{\frac{2}{3p+3}} 
 \\
 &\le& 
 \big( |s'| +\e \big)^{\frac{2}{3p+3}}.
 \eeaa
Then following the same line of calculation as in \eqref{eq:repeat calculus}, we can again verify that (iii) also holds in this case.
\end{proof}

%%%%%%%%%%%%%%%%%%%%%%%%%%%%%%

\subsection{Viscosity solution property of the regularized solutions}

In this subsection, we prove a crucial property of the regularization \eqref{defn2:un}. Namely, the induced finite-dimensional function $u^{n,\lambda_i}$ is a Crandall-Lions viscosity subsolution of the corresponding PDE with an appropriate error term. We recall that this does not imply the stronger claim that $u^{n,\lambda_i}$ is a $\cP-$ viscosity subsolution, since the last notion involves a larger set of test functions. This is a major difference between the approach of this paper and the one followed in our previous paper \cite{RTZ} focused on semilinear path-dependent PDEs.

\begin{prop}\label{prop:PPDE2PDE}
Let $\l_i$ be as in \eqref{def:lambda i}, and $u$ be a $\cP_L$-viscosity subsolution of PPDE \eqref{eq: PPDE} on $[0,T)\times\O$. Then $u^{n,\l_i}$ is a Crandall-Lions viscosity subsolution of the PDE:
 \beaa
 -\pa_s u^{n,\l_i} -G\big(s, \eta^{\l_i}(x),u^{n,\l_i} ,D u^{n,\l_i}, D^2 u^{n,\l_i} \big) 
 - \a^{n}(s) -\b^{u,n}(x) 
 &\le& 0,
 \q \mbox{on}\q
 (s_i,T)\times \dbR^d.
 \eeaa
where, for some constant $C>0$, $\alpha^n$ and $\beta^n$ are given by:
\be\label{def:anC}
\a^{n}(s) ~:= ~ C\Big( n |s- s_i| +  (n |s- s_i|)^{\frac{1}{p+1}} + n^{-\frac{1}{2}} \Big),
\ee
\be\label{def:dnC}
\b^{u,n}(x) ~:= ~ (\rho^G + L_0 \rho^u)\Big(C\big(n^{-\frac{1}{p+1}} + i \big| ({\rm x}_{i-1},x) \big|_p n^{-\frac{3p+3}{2p}}\big)\Big).
\ee
\end{prop}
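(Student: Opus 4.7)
The plan is to transfer the finite-dimensional Crandall--Lions subjet at $(s^*, x^*)$ into a $\cP_L$-subjet of $u$ at an $\eps$-optimizing path, apply the $\cP_L$-viscosity subsolution property of $u$ there, and then translate back using the $d_p$-continuity moduli of $u$ and $G$ combined with the quantitative bounds of Lemma \ref{lem:firstestimate}.

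Fix $(s^*, x^*) \in (s_i, T) \times \dbR^d$ and $(\a_0, \b_0, \g_0) \in \ul J u^{n,\l_i}(s^*, x^*)$. For $\eps>0$, choose an $\eps$-optimal pair $(\hat\th, \hat\ell)$ for $u^n(s^*, \eta^{\l_i}(x^*))$. Given a localization radius $\d>0$, a stopping time $\t \in \cT_{\ch_\d}$ and the canonical path $B$, set $t:=\hat t+\t$, $\o:=\hat\o \otimes_{\hat t} B$, $s:=s^*+\t$, $y:=x^*+B_\t$, and extend $\hat\ell|_{[0,\hat t]}$ to $\ell \in \cL_{t,s}$ by $\ell(r):=s^*+(r-\hat t)$ on $(\hat t, t]$ (with the $s_i=0$ boundary case handled as in Case 2 of the preceding regularity lemma). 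Combining the supremum definition $u(t,\o)\le u^n(s, \eta^{\l_i}(y)) + n\F(s, \eta^{\l_i}(y), (t,\o), \ell)$, the subjet bound for $u^{n,\l_i}$ at $(s^*, x^*)$, and the $\eps$-optimality of $(\hat\th, \hat\ell)$, yields
\beaa
u^{\hat\th}(\t, B) ~\le~ u(\hat\th) + \psi^{\a_0,\b_0,\g_0}(\t, B_\t) + n\D\F + \eps + o(\t + |B_\t|^2),
\eeaa
where $\D\F := \F(s, \eta^{\l_i}(y), (t,\o), \ell) - \F(s^*, \eta^{\l_i}(x^*), \hat\th, \hat\ell)$.

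The heart of the argument is the expansion of $\D\F$. The $\|\cd - I\|_\infty^{2/(3p+3)}$ summand grows by at most $\t^{2/(3p+3)}$ by construction of $\ell$. For the $\|\cd\|_{p+1}^{p+1}$ summand, one splits the integration domain: on $[0, \hat\ell^{-1}(s_i)]$ the integrand is unchanged; on $[\hat\ell^{-1}(s_i), \hat t]$ the first argument shifts by $B_\t$; on $[\hat t, t]$ an interval of length $\t$ introduces a new contribution involving $B_\t - B_{r-\hat t}$. A pointwise $(p+1)$-power expansion extracts first- and second-order terms in $B_\t$ (absorbed into $\b_0, \g_0$ as corrections $\tilde\b, \tilde\g$) and a $\t$-linear term whose size, through the flat tail $\eta^{\l_i}(x^*)_r = {\rm x}_i \1$ for $r \in [s_i, s^*]$, produces the $n|s^*-s_i|$ and $(n|s^*-s_i|)^{1/(p+1)}$ contributions of $\a^n(s^*)$, with the $n^{-1/2}$ summand absorbing higher-order remainders. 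The bounds in Lemma \ref{lem:firstestimate} control the corrections $(\tilde\a-\a_0, \tilde\b-\b_0, \tilde\g-\g_0)$, and the resulting paraboloid $\psi^{\tilde\a, \tilde\b, \tilde\g}$ witnesses $(\tilde\a, \tilde\b, \tilde\g) \in \ul\cJ_L u(\hat\th)$.

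Applying the $\cP_L$-viscosity subsolution property at $\hat\th$ gives $-\tilde\a - G(\hat\th, u(\hat\th), \tilde\b, \tilde\g) \le 0$. Using Assumption \ref{assum:G}(ii) to move the first argument of $G$ from $\hat\th$ to $(s^*, \eta^{\l_i}(x^*))$ contributes $\rho^G\bigl(d_p(\hat\th, (s^*, \eta^{\l_i}(x^*)))\bigr)$, and Assumption \ref{assum:G}(iii') combined with the monotonicity in $y$ from Remark \ref{rem:monotone} allows replacement of $(u(\hat\th), \tilde\b, \tilde\g)$ by $(u^{n,\l_i}(s^*, x^*), \b_0, \g_0)$ at the cost of $L_0\bigl(u(\hat\th) - u^{n,\l_i}(s^*, x^*)\bigr)^+ + L_0(|\tilde\b-\b_0| + |\tilde\g-\g_0|)$. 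Since $u^{n,\l_i}(s^*, x^*) \ge u(s^*, \eta^{\l_i}(x^*))$ (seen by approximating $\eta^{\l_i}_{s^*\we\cd}(x^*)$ by continuous paths in $d_p$, as in the lower-bound part of Lemma \ref{lem:un(T)}), the positive part is bounded by $\rho^u\bigl(d_p(\hat\th, (s^*, \eta^{\l_i}(x^*)))\bigr)$. Lemma \ref{lem:firstestimate} then combines the two $d_p$-moduli into $(\rho^G + L_0 \rho^u)\bigl(C(n^{-1/(p+1)} + i|{\rm x}_i|_p n^{-(3p+3)/(2p)})\bigr) = \b^{u,n}(x^*)$, while $(\tilde\a-\a_0)$ and $L_0(|\tilde\b-\b_0|+|\tilde\g-\g_0|)$ absorb into $\a^n(s^*)$. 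Letting $\eps \to 0$ yields the claimed Crandall--Lions subsolution inequality.

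The main obstacle is the precise expansion of $\D\F$: extracting the first- and second-order contributions in $B_\t$ that define $\tilde\b, \tilde\g$, and matching the exact form of $\a^n(s)$ --- in particular tracing the anomalous-looking $n|s - s_i|$ and $(n|s-s_i|)^{1/(p+1)}$ terms back to the flat tail of $\eta^{\l_i}(x^*)$ past $s_i$ in the $(p+1)$-power penalty. A secondary technicality is engineering the extension $\ell$ to simultaneously lie in $\cL_{t,s}$ and preserve the $\|\cd - I\|_\infty$ bound at the precise order $\t^{2/(3p+3)}$.
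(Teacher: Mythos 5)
Your plan follows the paper's high-level scheme (regularization, $\eps$-optimal pair, expansion of $\Delta\Phi$, then transfer back through the continuity moduli), but it has a genuine gap at the step where you assert that $(\tilde\a,\tilde\b,\tilde\g)\in\underline{\cJ}_L u(\hat\th)$. The pointwise inequality you derive reads
\[
u^{\hat\th}(\tau,B)\;\le\;u(\hat\th)+\phi^{\tilde\a,\tilde\b,\tilde\g}(\tau,B_\tau)+\eps+o(\tau,|B_\tau|^2),
\]
with an additive $\eps>0$ coming from the $\eps$-optimality of $(\hat\th,\hat\ell)$. The definition of $\underline{\cJ}_L u(\hat\th)$ requires
$\max_{\tau\in\cT_{\ch_\d}}\overline\cE_L\big[(u^{\hat\th}-\phi^{\tilde\a,\tilde\b,\tilde\g})(\tau,B)\big]=u(\hat\th)$.
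Since taking $\tau=0$ always gives $\geq u(\hat\th)$, your inequality only locates the max in $[u(\hat\th),\,u(\hat\th)+\eps]$; you cannot squeeze $\eps$ out while keeping $\hat\th$ fixed, because the supremum defining $u^n$ need not be attained and $\hat\th$ itself depends on $\eps$. Increasing $\tilde\a$ or $\tilde\g$ does not fix this at $\tau=0$. So the claim that the paraboloid ``witnesses'' a $\cP_L$-subjet at $\hat\th$ does not follow; you only obtain an $\eps$-approximate subjet at an $\eps$-dependent point, which is not enough to invoke the viscosity subsolution property.

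The paper resolves precisely this obstacle with the optimal stopping argument, which your proposal never invokes. From the $\eps$-perturbed jet and $1\wedge\hat\eps$-optimality they obtain a \emph{strict} inequality $u(\tilde\th)>\overline\cE_L[(u^{\tilde\th}-\phi^{\a',\b,\g'})(\ch_\d,B)]$ only at the exit time $\ch_\d$, then estimate $\dbE^\dbP[\Delta\Phi]$ from below in terms of $\dbE^\dbP[\ch]$ (using Lemma~\ref{lem:EH}, not a pathwise paraboloid expansion), and finally apply Lemma~\ref{lem:OS} — the Snell envelope characterization from Lemma~\ref{lem:OSprep} — to produce a possibly \emph{different} point $\th^*$ at which an exact $\cP_L$-subjet holds. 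This is the substitute for local compactness in path space, and it is what allows the $\eps$-slack to disappear. A secondary, less critical issue is that you attempt to encode all of $\Delta\Phi$ into corrected $(\tilde\b,\tilde\g)$. The piece $\int_{\tilde t}^{\tilde t+\tau}$ in $\Delta\Phi$ depends on the whole trajectory of $B$ on $[0,\tau]$, not only on $(\tau,B_\tau)$, so it is not a paraboloid; the paper instead shows its integrand is $<\eps/n$ for $\d$ small and folds it into the localization, and handles the linear/quadratic-in-$B_\ch$ piece by the moment bounds $|\dbE^\dbP[B_\ch]|\le C\dbE^\dbP[\ch]$, $\dbE^\dbP[|B_\ch|^2]\le C\dbE^\dbP[\ch]$ rather than by modifying $\b_0,\g_0$. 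Your expansion idea could in principle be made to work as an alternative bookkeeping device, but even with that repair the proof still needs the optimal stopping lemma to convert the strict expectation inequality into an honest element of $\underline{\cJ}_L u$ at some point of $\Th$.
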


%{\color{red} I believe the first estimate in Lemma 5.4 should be $n^{-{1\over p+1}}$, rather than $n^{-1}$. This will affect some estimates in this proof, e.g. in the first estimate of (5.22) and another one in the middle of next page. I guess it's ok, but the estimates are too subtle and I'm not confident to check all the details. Could you check?) }

\begin{proof}
Let $(s,x)\in[s_i,T)\times\dbR^d$, and $(\a,\b,\g)\in \ul J u^{n,\l_i} (s,x)$. Then for all $\e>0$, 
\begin{equation}\label{max-jet}
u^{n,\l_i} (s,x) 
=
\max_{t\in [s,s+h], |y-x|\le h}
\Big\{u^{n,\l_i}(t,y) - (\a+\e)(t-s) -\b \cd (y-x) -\frac12 (y-x)^{\rm T}(\g+\e \dbI_d) (y-x) \Big\},
\end{equation}
for some $h\in(0,1)$, where $\dbI_d$ is the $d\times d$-identity matrix. Let $\ch := \ch_\d \le h$ be a stopping time as in \eqref{eq:Heps}, for some $\d<h$ to be chosen later. From \eqref{max-jet}, we deduce that
\bea\label{eq:defnvs}
u^{n,\l_i} (s,x)
&\!\!>& \!\! 
 {\sf E} ~ :=
\ol\cE_L\Big[u^{n,\l_i}(s+\ch,x+ B_\ch) - (\a+2\e) \ch -\b \cd B_\ch - \frac12 B_\ch^{\rm T}(\g+\e \dbI_d) B_\ch \Big].
~~
\eea 
Our objective is to deduce from this inequality an appropriate point in the $\cP-$subjet of $u$. 

\ms

\no {\rm (i)}\quad For $\hat \e:= u^{n,\l_i} (s,x)-  {\sf E} $, let $(\tilde \th , \tilde \ell)$ be a $(1\we\hat \e)$-maximizer in the definition of $u^{n,\l_i}(s,x)$. Then
\bea\label{eq:1optimal}
 {\sf E}  &<& u(\tilde\th)-n\Phi\big(s,\eta^{\lambda_i}(x),\tilde\theta,\tilde\ell\big).
\eea
We recall from Lemma \ref{lem:firstestimate} that, with ${\rm x}_i:=({\rm x}_{i-1},x)$, we have
\bea\label{est:tildeth-eta}
\big|{\rm x}_i\1-\tilde\o_{\tilde t}\big|\le Cn^{-\frac{1}{p+1}}
&\mbox{and}&
d_p\big((s,\eta^{\l_i}(x)), \tilde\th\big) 
\le
 C\big(n^{-\frac{1}{p+1}} + i\,n^{-\frac{3p+3}{2p}} |{\rm x}_i|_p \big).
\eea
On the other hand, it follows from the definition of $u^n$ in \eqref{defn2:un} that 
\bea\label{eq:otherhandI}
u^{n,\l_i}(s+\ch,x+ B_\ch) 
&\ge& 
u^{\tilde \th}(\ch,B) 
-n \Phi\big(s+\ch,\eta^{\lambda_i}(x+\ch),\tilde\theta,\tilde\ell\big).
\eea
Combining \eqref{eq:defnvs}, \eqref{eq:1optimal} and \eqref{eq:otherhandI}, we get
 \bea\label{eq: ELdef}
 u(\tilde \th) 
 &>& 
 \ol\cE_L\big[u^{\tilde \th}(\ch,B) 
                     - (\a+2\e) \ch 
                     -\b \cd B_\ch 
                     - \frac12 B_\ch^{\rm T}(\g+\e \dbI_d) B_\ch 
                     + n \Delta\Phi \Big],
 \eea
where
\beaa
\Delta\Phi
&:=& 
\Phi\big(s,\eta^{\lambda_i}(x),\tilde\theta,\tilde\ell\big)
- 
\Phi\big(s+\ch,\eta^{\lambda_i}(x+\ch),\tilde\theta,\tilde\ell\big)
\\
&=&
\int_0^{T+1} \big| \eta^{\lambda_i}(x)_{s\wedge\tilde\ell(t)}-\tilde\o_{\tilde t\wedge t}\big|^{p+1}dt
-
\int_0^{T+1} \big| \eta^{\lambda_i}(x+B_\ch)_{s\wedge\tilde\ell(t)}-\tilde\o_{\tilde t\wedge t}\big|^{p+1}dt.
\eeaa

\noindent {\rm (ii)}\quad  In this step, we derive an appropriate minorant of $\Delta\F$. Since $\tilde\ell\in\cL_{\tilde t,s}$, this difference reduces to
 \beaa
 \Delta\Phi
 &=&
 \int_{\tilde\ell^{-1}(s_i)}^{T+1} \big| \eta^{\lambda_i}(x)_{s\wedge\tilde\ell(t)}-\tilde\o_{\tilde t\wedge t}\big|^{p+1}dt
-
\int_{\tilde\ell^{-1}(s_i)}^{T+1} \big| \eta^{\lambda_i}(x+B_\ch)_{s\wedge\tilde\ell(t)}-\tilde\o_{\tilde t\wedge t}\big|^{p+1}dt
\\
&=&
 \int_{\tilde\ell^{-1}(s_i)}^{\tilde t} \Big( \big|\bar x-\tilde \o_t \big|^{p+1}
- \big|{\rm x}_i\1+B_\ch-\tilde \o_t \big|^{p+1}\Big) dt 
\\ &&\hspace{10mm}
+ \int_{\tilde t}^{\tilde t+\ch}\Big(\big|{\rm x}_i\1-\tilde \o_{\tilde t} \big|^{p+1} 
 - \big|{\rm x}_i\1+B_\ch -\tilde\o_{\tilde t}-B_{t-\tilde t} \big|^{p+1} \Big) dt.
\eeaa
We next use the obvious identity $|a^{p+1}-b^{p+1}|\le |a-b|\sum_{j=0}^p |a|^j|b|^{p-j}\le(p+1) |a-b|(|a|+|b|)^p$.
This together with \eqref{est:tildeth-eta} allows to control the integrand of the second term:
\beaa
 \Big| \big|{\rm x}_i\1-\tilde \o_{\tilde t}\big|^{p+1} 
                - \big|{\rm x}_i\1+B_\ch -\tilde\o_{\tilde t}-B_{t-\tilde t}\big|^{p+1}\Big|
 &\!\!\le &\!\! 
 (p+1) \big|B_\ch -B_{t-\tilde t}\big| 
            \big(2|{\rm x}_i\1 -\tilde\o_{\tilde t}|+|B_\ch -B_{t-\tilde t}|\big)^p 
 \\
&\!\!\le &\!\! 
(p+1) \big|B_\ch -B_{t-\tilde t}\big| 
            \big(2Cn^{-\frac{1}{p+1}}+|B_\ch -B_{t-\tilde t}|\big)^p.
 \eeaa
Since $0\le t-\tilde t\le \ch=\ch_\delta$, we see that $|B_\ch -B_{t-\tilde t}|\le 2\delta$. We then obtain for sufficiently small $\delta$:
\beaa
 \Big| \big|{\rm x}_i\1-\tilde \o_{\tilde t}\big|^{p+1} 
                - \big|{\rm x}_i\1+B_\ch -\tilde\o_{\tilde t}-B_{t-\tilde t}\big|^{p+1}\Big|
 &\le &
 2\d(p+1) \big(2Cn^{-\frac{1}{p+1}}+2\delta \big)^p
 \;<\; 
 \e\;n^{-1}. 
\eeaa
Therefore,
  \beaa
  \Delta \F 
  &\!\!\ge& \!\!
  -\e\;n^{-1}\ch
  + \int_{\tilde\ell^{-1}(s_i)}^{\tilde t} 
     \big( \big|{\rm x}_i\1-\tilde \o_t \big|^{p+1}
            - \big|{\rm x}_i\1+B_\ch-\tilde \o_t \big|^{p+1}\big) dt 
 \\
 &\!\!\underset{(\star)}{\ge}&\!\!
 -\e\;n^{-1}\ch
  -\!\!\int_{\tilde\ell^{-1}(s_i)}^{\tilde t} \!\!
 \big[(p+1) B_\ch\cdot({\rm x}_i\1-\tilde \o_t) |{\rm x}_i\1-\tilde \o_t|^{p-1}
 \!\!+ C |B_\ch|^2( |B_\ch|^{p-1} +|{\rm x}_i\1-\tilde \o_t |^{p-1} )\big]dt,
 \eeaa
where the last inequality $(\star)$ follows from an easy calculation reported in Step (v) below. Since $|B_\ch|\le\delta\le 1$, this provides
  \bea
  \Delta \F
  &\!\!\ge&\!\!
 -\e\;n^{-1}\ch
  - \int_{\tilde\ell^{-1}(s_i)}^{\tilde t}
             \big[(p+1)B_H\!\cdot\! ({\rm x}_i\1-\tilde \o_t)|{\rm x}_i\1-\tilde \o_t|^{p-1}
                     + C |B_H|^2(1 +|{\rm x}_i\1-\tilde \o_t |^{p-1} )\big]dt
  \nonumber\\
  &\!\!\ge&\!\!
 -\e\;n^{-1}\ch
 -\!\!\int_{\tilde\ell^{-1}(s_i)}^{\tilde t}\!\!
             \big[(p+1)B_H\!\cdot\! ({\rm x}_i\1-\tilde \o_t)|{\rm x}_i\1-\tilde \o_t|^{p-1}
                     \!\!+\!\! C |B_H|^2(2 +|{\rm x}_i\1-\tilde \o_t |^p )\big]dt.~~
 \label{DeltaPhige}
 \eea
 
\noindent {\rm (iii)}\quad We now deduce from the previous step the corresponding minorant of $ \dbE^\dbP[\Delta \F]$, for an arbitrary $\dbP\in\cP_L$. By Lemma \ref{lem:EH}, we deduce from \eqref{DeltaPhige} that 
 \bea\label{estimate:EDelta}
 \dbE^\dbP[\Delta \F]
 &\ge &
 - \dbE^\dbP[\ch]\Big( \e\;n^{-1}+C\int_{\tilde\ell^{-1}(s_i)}^{\tilde t} \big|{\rm x}_i\1-\tilde \o_t \big|^p dt
                                     + C|\tilde t - \tilde\ell^{-1}(s_i)|  
                           \Big).
 \eea
We shall verify in Steps (vi) and (vii) below that the following estimates hold:
\be\label{claim1}
|\tilde t - \tilde\ell^{-1}(s_i)| \le C n^{-\frac{3p+3}{2}} + |s - s_i|,
\ee
\be\label{claim2}
\int_{\tilde\ell^{-1}(s_i)}^{\tilde t} |{\rm x}_i\1-\tilde \o_t|^p dt 
~\le ~ Cn^{-\frac{p}{p+1}}\Big(n^{-\frac{3p+3}{2}} + |s-s_i |\Big)^{\frac{1}{p+1}},
\ee
where $C$ is a positive constant independent of $n$. Then
 \beaa
 n\; \dbE^\dbP [\Delta \F] 
 & \ge & 
 -\dbE^\dbP[\ch] \big(\e +\a^n(s)\big),
 \eeaa
where $\a^{n}(s)$ is as defined in \eqref{def:anC}. 

\ms

\no {\rm (iv)}\quad We are now ready to prove the required result. Plugging the last minorant in \eqref{eq: ELdef}, we see that
 \beaa
 u(\tilde \th) 
 &>& 
 \ol\cE_L\Big[u^{\tilde \th}(\ch,B) 
                     - (\a + \a^{n}(s) + 3\e) \ch 
                     -\b \cd B_\ch 
                     - \frac12 B_\ch^{\rm T}(\g+\e \dbI_d) B_\ch 
               \Big].
\eeaa
By Lemma \ref{lem:OS}, there is a point $\th^*$ such that
 \bea\label{optimalpoint}
 t^* < \ch (\o^*) 
 &\mbox{and}&
 \big(\a + \a^{n}(s) + 3\e,~
        \b+(\g+\e\dbI_d)\o^*_{t^*}, ~
        \g +\e\dbI_d 
 \big) \;\in \ul\cJ_L u(\tilde t+t^*, \tilde\o\otimes_{\tilde t} \o^*).~~~
 \eea
By choosing $\d$ small enough, we have 
\be\label{est:xiao}
|(\g+\e\dbI_d)\o^*_{t^*}| ~\le ~ |\g+\e\dbI_d | \d ~\le ~ \frac{\e}{L_0}, 
\ee
\be\label{est:tildeth-thstar}
d_p \big(\tilde\th, (\tilde t+t^*, \tilde\o\otimes_{\tilde t} \o^*)\big)
=
t^* + \Big(\int_{\tilde t}^{T+1} |\o^*_{(t-\tilde t)\we t^*}|^p dt  \Big)^{\frac{1}{p}}~\le ~ C\d ~\le ~ \e.
\ee
Since $u$ is a $\cP_L$-viscosity sub-solution, it follows from \eqref{optimalpoint} that
\beaa
-\a - \a^{n}(s) - 3\e -G \big(\tilde t+t^*,  \tilde\o\otimes_{\tilde t} \o^*, u ,\b+ (\g+\e\dbI_d)\o^*_{t^*}, \g+\e\dbI_d \big) \le 0.
\eeaa
Recall that $u$ is $d_p-$uniformly continuous. By using Assumption \ref{assum:G} and the estimates \eqref{est:tildeth-eta}, \eqref{est:xiao} and \eqref{est:tildeth-thstar}, we deduce from the last inequality that
 \beaa
 -\a - \a^{n}(s) - 4\e - \b^{u,n}(x) -G(s, \eta^{\l_i}(x), u\big(s, \eta^{\l_i}(x)\big),\b, \g+\e\dbI_d) 
 &\le& 
 0,
 \eeaa  
where $\b^{u,n}(x)$ is as defined in \eqref{def:dnC}. Finally, sending $\e\rightarrow 0$ and using the monotonicity assumption in Remark \ref{rem:monotone}, we obtain
\beaa
-\a  - \a^{n}(s) -\b^{u,n}(x) -G\big(s,\eta^{\l_i}(x), u^{n,\l_i}(x), \b,\g \big) &\le & 0.
\eeaa

\ms

\no {\rm (v)} {\it Proof of $(\star)$}\q Clearly, this inequality is implied by
\beaa
|a+b|^{p+1} \le |a|^{p+1} + (p+1)a\cdot b |a|^{p-1} +C|b|^2(|b|^{p-1}+|a|^{p-1}),\q\mbox{for}~~a,b\in \dbR^d,\q\mbox{for some}~~C\ge 0,
\eeaa
which we now verify. Since $p>1$ is odd, see Remark \ref{rem:odd}, we have
 \beaa
 |a+b|^{p+1} \;=\; 
 |a|^{p+1} + (p+1) a\cdot b |a|^{p-1} + R,
 &\mbox{where}&
 R := \!\!\!\!\sum_{k+j\le \frac{p+1}{2},k+2j\ge 2} 
                (a\cdot b)^k |b|^{2j} |a|^{p+1-2k-2j}.
 \eeaa
The required inequality follows from the existence of a constant $C$, depending only on $p$, such that
 \beaa
 |R|
 ~\le ~ C\sum_{k=2}^{p+1} |b|^k |a|^{p+1-k}
 ~\le ~ C|b|^2 \sum_{k=0}^{p-1} |b|^k |a|^{p-1-k} 
 ~\le ~ C p |b|^2 \big(|b|^{p-1} + |a|^{p-1}\big).
 \eeaa

\no (vi) \q {\it Proof of \eqref{claim1}.}\quad Recall the estimates in \eqref{estimate: L2}. Since $\tilde \ell \in \cL_{\tilde t, s}$, we have
\beaa
|\tilde t -\tilde \ell^{-1}(s_i) |  & = &  |\tilde \ell^{-1}(s) - \tilde \ell^{-1}(s_i)|\\
&\le & |\tilde \ell^{-1}(s) - s | + |s-s_i| +|\tilde \ell^{-1}(s_i)-s_i|\\
&\le & C n^{-\frac{3p+3}{2}}+|s-s_i|.
\eeaa
For the last inequality, we used the fact that $(\tilde\th,\tilde \ell)$ is $1$-optimal in the definition of $u^{n,\l_i}(s,x)$.

\ms

\no (vii) \q {\it Proof of \eqref{claim2}.}\quad We directly estimate that
 \beaa
 \int_{\tilde \ell^{-1}(s_i)}^{\tilde t}|{\rm x}_i\1-\tilde \o_t|^p dt
 &\le & 
  \Big(\int_{\tilde \ell^{-1}(s_i)}^{\tilde t} |{\rm x}_i\1-\tilde \o_t |^{p+1} dt
  \Big)^{\frac{p}{p+1}}
   \Big(\tilde t-\tilde \ell^{-1}(s_i)
   \Big)^{\frac{1}{p+1}}\\
&\le & 
Cn^{-\frac{p}{p+1}}\big(n^{-\frac{3p+3}{2}} + |s-s_i |\big)^{\frac{1}{p+1}},
\eeaa
where the last inequality follows from \eqref{claim1} together with the $1$-optimality of $(\tilde \th,\tilde\ell)$.
\end{proof}

\section{Comparison result}\label{sect:comparison}

In this section, we fix $a$, $m:=m_n\in\dbN$, and the partition $(s^n_i)_i$ as follows: 
 \beaa
 0<a<(5p)^{-1},~~
 m_n:= \lfloor n^{1+a}+1\rfloor,
 &\mbox{and}& 
 s^n_i :=(i-1)m_n^{-1}T, ~~i=1,\ldots,m_n+1,
 \eeaa
where $\lfloor \alpha\rfloor$ denotes the largest integer minorant of $\alpha$. We fix a piecewise constant path with jumps occurring at $\{s^n_j\}_{j\le i}$, for all $i\le m_n$:
 \beaa
 \eta^{\l^n_i}(x)
 &\mbox{with}&
 \l^n_i := (\pi^n_i, {\rm x}^n_{i-1}),
 ~\pi^n_i := (s^n_1,\cds,s^n_i),~\mbox{and}~
 {\rm x}^n_{i-1}:=(x^n_1,\cds,x^n_{i-1})\in\dbR^{i-1}.
 \eeaa
The following is a direct corollary of Proposition \ref{prop:PPDE2PDE}.

\begin{cor}\label{cor:PDEsub}
Function $u^{n,\l^n_i}$ is a Crandall-Lions viscosity subsolution of the PDE:
 \beaa
 -\pa_s u^{n,\l^n_i} -G\big(s,\eta^{\l^n_i}(x),u^{n,\l^n_i}, D u^{n,\l^n_i}, D^2 u^{n,\l^n_i} \big) 
 - R^{u,n}(x) 
 &\le& 
 0,
 ~~\mbox{on}~~
 \big(s^n_i, s^n_{i+1}\big)\times \dbR^d,
\eeaa
where $R^{u,n}(x) :=  Cn^{-\frac{a}{p+1}} +\b^{u,n}(x)$. Moreover
\\
{\rm (i)}\quad $u^{n,\l^n_i}(s^n_{i+1},x^n_{i+1}) = u^{n,\l^n_{i+1}}(s^n_{i+1},0)$, 
\\
{\rm (ii)}\quad $u^{n,\l^n_i}$ is locally $\frac{2}{3p+3}-$H\"{o}lder-continuous in $s$, Lipschitz-continuous in $x$ on $\big(s^n_i,s^n_{i+1}\big)$,
\\
{\rm (iii)}\quad $u^{n,\l^n_i}$ is lower-semicontinuous at $s^n_i$, i.e. $\liminf_{s' \downarrow s^n_i, x'\rightarrow x} u^{n,\l^n_i}(s',x')\ge u^{n,\l^n_i} (s^n_i,x)$.
\end{cor}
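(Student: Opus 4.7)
The plan is to obtain all four statements as direct consequences of Proposition \ref{prop:PPDE2PDE} and the regularity results for $u^{n,\l_i}$ already established in Section \ref{sect:regularization}, specialized to the equispaced partition $(s^n_i)_i$ and the fixed tuple ${\rm x}^n_{i-1}$.

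First, I would invoke Proposition \ref{prop:PPDE2PDE} with $\l_i := \l^n_i$ to obtain the Crandall--Lions viscosity subsolution property for $u^{n,\l^n_i}$ with error $\a^n(s) + \b^{u,n}(x)$ on $(s^n_i, T)\times\dbR^d$, and then restrict to the subinterval $(s^n_i, s^n_{i+1})$. The key reduction is that the choice $m_n = \lfloor n^{1+a}+1\rfloor$ forces $|s - s^n_i| \le s^n_{i+1}-s^n_i = T/m_n \le T n^{-(1+a)}$ on this subinterval. Substituting into the definition \eqref{def:anC} gives the three bounds
\beaa
n|s-s^n_i| \le T n^{-a},
\qquad
\big(n|s-s^n_i|\big)^{\frac{1}{p+1}} \le T^{\frac{1}{p+1}} n^{-\frac{a}{p+1}},
\qquad
n^{-\frac12}.
\eeaa
The constraint $0 < a < (5p)^{-1}$ implies $\tfrac{a}{p+1} < a < \tfrac12$, so the slowest-decaying of these bounds is $n^{-a/(p+1)}$, whence $\a^n(s) \le C n^{-a/(p+1)}$ uniformly on $(s^n_i, s^n_{i+1})$. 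Combining with the spatial error $\b^{u,n}(x)$ produces the bound $R^{u,n}(x)$ advertised in the statement.

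Statement (i) would then follow by simply unfolding the definition \eqref{def:ulambda}: both $u^{n,\l^n_i}(s^n_{i+1},x^n_{i+1})$ and $u^{n,\l^n_{i+1}}(s^n_{i+1},0)$ reduce to $u^n$ evaluated at time $s^n_{i+1}$ on the same piecewise constant path stopped at that time, since the trivial extra jump of size $0$ at $s^n_{i+1}$ appearing on the right-hand side contributes nothing. Statements (ii) and (iii) are inherited directly from the last (unnumbered) regularity lemma of the previous subsection applied with $\l_i := \l^n_i$, because $(s^n_i, s^n_{i+1}) \subset (s^n_i, T]$ is exactly the regime in which that lemma provides H\"older continuity in $s$, local Lipschitz continuity in $x$, and lower semicontinuity at $s^n_i$.

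There is no genuine obstacle in this corollary beyond bookkeeping: the only substantive step is balancing the three decay rates hidden in $\a^n(s)$, which is precisely what the constraint $a < (5p)^{-1}$ in the definition of $m_n$ was introduced to achieve.
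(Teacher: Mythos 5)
Your proof is correct and carries out exactly the bookkeeping that the paper leaves implicit when it asserts this is a ``direct corollary'' of Proposition~\ref{prop:PPDE2PDE}: restrict to the subinterval, use $|s-s^n_i|\le Tn^{-(1+a)}$ to dominate the three pieces of $\a^n(s)$ by $Cn^{-a/(p+1)}$, and import the three regularity statements from the preceding unnumbered lemma with $\l_i=\l^n_i$. The one small inaccuracy is your closing remark: bounding $\a^n(s)$ by $Cn^{-a/(p+1)}$ only needs $a<(p+1)/2$, so the stronger constraint $a<(5p)^{-1}$ is not what makes this corollary work---it is required later, in Lemma~\ref{lem:un(T)} and in Step~3 of the proof of Theorem~\ref{thm:comparison}, to make the accumulated error terms vanish as $n\to\infty$.
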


We next state the similar result for supersolutions.
Let $v$ be a bounded and uniformly continuous $\cP_L$-viscosity supersolution. Then we introduce the regularization:
 \beaa
 v^n(s,\eta)
 &:=& 
 \inf_{\th \in \Th\backslash {\bf 0},~\ell\in \cL_{t,s}}\big\{ v(\th) + n\F (s,\eta,\th,\ell) \big\}.
 \eeaa
By the same arguments as in the previous section, we have that the function $v^{n,\l^n_i}$ satisfies the corresponding symmetric properties.

\begin{cor}\label{cor:PDEsuper}
Function $v^{n,\l^n_i}$ is a Crandall-Lions viscosity supersolution of the PDE:
 \beaa
 -\pa_s v^{n,\l^n_i} -G\big(s,\eta^{\l^n_i}(x),v^{n,\l^n_i}, Dv^{n,\l^n_i}, D^2v^{n,\l^n_i} \big) 
 + R^{v,n}(x) 
 &\ge& 
 0,
 ~~\mbox{on}~~
 \big(s^n_i, s^n_{i+1}\big)\times \dbR^d,
\eeaa
where $R^{v,n}(x) :=  Cn^{-\frac{a}{p+1}} +\b^{v,n}(x)$. Moreover
\\
{\rm (i)}\quad $v^{n,\l^n_i}(s^n_{i+1},x^n_{i+1}) = v^{n,\l^n_{i+1}}(s^n_{i+1},0)$, 
\\
{\rm (ii)}\quad $v^{n,\l^n_i}$ is locally $\frac{2}{3p+3}-$H\"{o}lder-continuous in $s$, Lipschitz-continuous in $x$ on $\big(s^n_i,s^n_{i+1}\big)$,
\\
{\rm (iii)}\quad $v^{n,\l^n_i}$ is upper-semicontinuous at $s^n_i$, i.e. $\limsup_{s' \downarrow s^n_i, x'\rightarrow x} v^{n,\l^n_i}(s',x')\le v^{n,\l^n_i} (s^n_i,x)$.
\end{cor}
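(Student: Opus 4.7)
The plan is to obtain Corollary \ref{cor:PDEsuper} by mirroring the development of Section \ref{sect:regularization} for the inf-convolution $v^n$ in place of the sup-convolution $u^n$. Concretely, I would first record the symmetric analogue of Proposition \ref{prop:PPDE2PDE}: if $v$ is a bounded $d_p$-uniformly continuous $\cP_L$-viscosity supersolution of \eqref{eq: PPDE}, then for any $\l_i$ as in \eqref{def:lambda i}, the function $v^{n,\l_i}(s,x):=v^n(s,\eta^{\l_i}(x))$ is a Crandall-Lions viscosity supersolution of
\beaa
-\pa_s v^{n,\l_i} - G\big(s,\eta^{\l_i}(x),v^{n,\l_i},Dv^{n,\l_i},D^2 v^{n,\l_i}\big) + \a^n(s)+\b^{v,n}(x) &\ge& 0
\eeaa
on $(s_i,T)\times \dbR^d$, with $\a^n,\b^{v,n}$ defined as in \eqref{def:anC}--\eqref{def:dnC}. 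Granted this, Corollary \ref{cor:PDEsuper} follows exactly as Corollary \ref{cor:PDEsub} follows from Proposition \ref{prop:PPDE2PDE}: on each partition cell $(s^n_i,s^n_{i+1})$ we have $n|s-s^n_i|\le Cn^{-a}$, which turns the $(n|s-s_i|)^{1/(p+1)}$ term in $\a^n$ into $Cn^{-a/(p+1)}$, the remaining $n|s-s_i|$ term is even smaller, and the $n^{-1/2}$ term is absorbed into $R^{v,n}(x)$.

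The preliminary lemmas \ref{lem:uniformbound}, \ref{lem:firstestimate}, \ref{lem:ppt-rgl}, together with the Hölder/Lipschitz regularity lemma preceding Proposition \ref{prop:PPDE2PDE}, carry over verbatim: each only uses the uniform bound $\|v^n\|_\infty\le \|v\|_\infty$ (proved symmetrically) and the penalty $\F$ which enters the inf-convolution with the same positive sign. The sole sign-sensitive point is that the one-sided semicontinuity at the partition times $s^n_i$ reverses: the approximate minimizers in the definition of $v^n$ yield $\limsup_{s'\downarrow s^n_i,\,x'\to x}v^{n,\l^n_i}(s',x')\le v^{n,\l^n_i}(s^n_i,x)$, giving (iii). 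Property (i) is immediate from $\eta^{\l^n_i}(x^n_i)=\eta^{\l^n_{i+1}}(0)$ together with the definition \eqref{def:ulambda}.

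For the symmetric Proposition, I would start from $(\a,\b,\g)\in \ol J v^{n,\l_i}(s,x)$ and localize as in \eqref{max-jet} to get
\beaa
v^{n,\l_i}(s,x) &<& \ul\cE_L\Big[v^{n,\l_i}(s+\ch,x+B_\ch)-(\a-2\e)\ch-\b\cdot B_\ch-\tfrac12 B_\ch^{\rm T}(\g-\e\dbI_d)B_\ch\Big],
\eeaa
then pick a $(1\wedge\hat\e)$-minimizer $(\tilde\th,\tilde\ell)$ in the definition of $v^n(s,\eta^{\l_i}(x))$, giving $v^{n,\l_i}(s,x)>v(\tilde\th)+n\Phi-\hat\e$, and use the defining inequality $v^{n,\l_i}(s+\ch,x+B_\ch)\le v^{\tilde\th}(\ch,B)+n\Phi(s+\ch,\eta^{\l_i}(x+B_\ch),\tilde\th,\tilde\ell)$. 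Combining these three inputs and invoking the same expansion of $\Delta\Phi$ as in Steps (ii)--(iii) of Proposition \ref{prop:PPDE2PDE} (the bound on $\Delta\Phi$ is sign-invariant because the penalty is quadratic-like in the end differences), we obtain
\beaa
v(\tilde\th) &<& \ul\cE_L\Big[v^{\tilde\th}(\ch,B)-(\a-\a^n(s)-3\e)\ch-\b\cdot B_\ch-\tfrac12 B_\ch^{\rm T}(\g-\e\dbI_d)B_\ch\Big],
\eeaa
and a symmetric analogue of Lemma \ref{lem:OS} (applied to $-v$, equivalently built from the infimal optimal stopping problem) produces $\th^*$ with $\big(\a-\a^n(s)-3\e,\b+(\g-\e\dbI_d)\o^*_{t^*},\g-\e\dbI_d\big)\in\overline{\cJ}_L v(\tilde t+t^*,\tilde\o\otimes_{\tilde t}\o^*)$. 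The supersolution property, together with Assumption \ref{assum:G} and the $d_p$-estimates \eqref{est:tildeth-eta}, \eqref{est:xiao}, \eqref{est:tildeth-thstar}, then gives the required inequality after sending $\e\to 0$.

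The main obstacle is purely bookkeeping: one must track all the reversed inequalities between the superjet localization, the $\d$-minimizer defining $v^n$, and the subsequent control of $\Delta\Phi$, to verify that the monotonicity convention in Remark \ref{rem:monotone} (which for supersolutions gives $G(\th,y,z,\g)-G(\th,y',z',\g')\ge -L_0((y-y')^-+|z-z'|+|\g-\g'|)$) pushes the error $\b^{v,n}$ onto the correct side. Once this is carefully verified, the remainder of the argument is a symmetric transcription of Proposition \ref{prop:PPDE2PDE} and Corollary \ref{cor:PDEsub}.
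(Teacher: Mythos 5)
Your proposal is correct and takes essentially the same approach as the paper, which simply observes that Corollary \ref{cor:PDEsuper} follows ``by the same arguments as in the previous section'' applied to the inf-convolution $v^n$ instead of the sup-convolution $u^n$. You have correctly identified all the sign-sensitive steps (the reversed semicontinuity at $s^n_i$, the superjet localization, the infimal optimal-stopping analogue of Lemma \ref{lem:OS}, and the direction of the Lipschitz estimate in Remark \ref{rem:monotone}), and your observation that the remaining lemmas carry over verbatim because the penalty $\Phi$ enters with the same sign matches the paper's intent.
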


As a final ingredient for our proof of the comparison result, we introduce for $(s,x)\in[s^n_i,T]\times\dbR^d$:
 \beaa
 u^{n,\l^n_i,\k}(s,x) 
 &: =& 
 e^{2L_0 s} u^{n,\l^n_i}(s,x) - \frac{\k n^{-1-a}}{s- s^n_i} - \frac{1}{2n} |x|^2,
 \\
 v^{n,\l^n_i,\k}(s,x) 
 &: =& 
 e^{2L_0 s} v^{n,\l^n_i}(s,x) + \frac{\k n^{-1-a}}{s- s^n_i} + \frac{1}{2n} |x|^2.
 \eeaa
By the standard change of variable in the Crandall-Lions theory of viscosity solutions, we deduce from Corollaries \ref{cor:PDEsub} and \ref{cor:PDEsuper} that the functions $u^{n,\l^n_i, \k}$ and $v^{n,\l^n_i, \k}$ are respectively viscosity sub-solution and super-solution on $(s^n_i,s^n_{i+1}]\times \dbR^d$ of
 \beaa
 -\pa_s u^{n,\l^n_i,\k}  
 - \bar G\big(s,\eta^{\l^n_i}(x), u^{n,\l^n_i,\k},D u^{n,\l^n_i,\k}+ \frac{x}{n}, D^2 u^{n,\l^n_i,\k}+\frac{1}{n} \dbI_d \big) 
 - R^{u,n}(x)
 &\le & 0,
 \\
 -\pa_s v^{n,\l^n_i,\k} 
 - \bar G\big(s,\eta^{\l^n_i}(x), v^{n,\l^n_i,\k},D v^{n,\l^n_i,\k}- \frac{x}{n}, D^2 v^{n,\l^n_i,\k}-\frac{1}{n}\dbI_d \big) 
 + R^{v,n}(x)
 &\ge & 
 0,
\eeaa
where $\bar G(\th, y, z,\g) = -2L_0 y + e^{2L_0 t}G(\th, e^{-2L_0 t}y, e^{-2L_0 t}z, e^{-2L_0 t}\g)$. In particular, note that
\beaa
\bar G(\th,y,z,\g) - \bar G(\th, y',z,\g) ~ \le ~ -L_0 (y-y')^+ +3L_0 (y-y')^- ,
\eeaa
and therefore
 \bea\label{barG}
 L_0(y-y') 
 &\le& 
 \big(\bar G(\th,y',z,\g) - \bar G(\th, y,z,\g)\big)^+
 ~\le~ 
 \big|\bar G(\th,y',z,\g) - \bar G(\th, y,z,\g)\big|.
 \eea

\vspace{3mm}

\no{\bf Proof of Theorem \ref{thm:comparison}}\q Without loss of generality, we only prove $(u-v)({\bf 0}) \le 0$.
\\
{\bf 1.}\quad Following the classical argument of doubling variables, for fixed $n$ and $i$, we define 
\beaa
w^{\k,\e}(s,x,x') & := & u^{n,\l^n_i,\k}(s,x) -v^{n,\l^n_i,\k}(s,x') - \frac{1}{2 \e} |x-x'|^2.
\eeaa
There is a constant $C>0$ only dependent on $T$ and the bound of $u,v$, and a point $(\hat s_{\k,\e}, \hat x_{\k,\e},\hat x'_{\k,\e})\in Q_n:=[s^n_i +n^{-1-a}\k/C ,s^n_{i+1}]\times O_{C\sqrt{n}} \times O_{C\sqrt{n}}$ such that
 \beaa
 w^{\k,\e}(\hat s_{\k,\e},\hat x_{\k,\e},\hat x'_{\k,\e}) 
 &=& 
 \max_{(s,x,x')\in (s^n_i,s^n_{i+1}]\times \dbR^d \times \dbR^d}
 w^{\k,\e}(s,x,x').
 \eeaa
Since $Q_n$ is compact, $\big\{(\hat s_{\k,\e}, \hat x_{\k,\e},\hat x'_{\k,\e})\big\}_\e$ has a converging sub-sequence whose limit is denoted by $(\hat s_{\k}, \hat x_{\k},\hat x'_{\k})$. In particular, it is easy to show that $\hat x_\k = \hat x'_\k$. \\
{\bf 2.}\quad We continue by discussing two alternative cases.
\\
\no{\it Case 1.}\q Suppose that there are only a finite number of $\k$ such that  $\hat s_\k < s^n_{i+1}$, and thus there is sub-sequence still denoted as $\{\hat s_\k\}_\k$ such that $\hat s_\k \equiv s^n_{i+1}$. By Corollaries \ref{cor:PDEsub} (ii) and \ref{cor:PDEsuper} (ii), $u^{n,\l^n_i,\k}$ and $v^{n,\l^n_i,\k}$ are continuous on $(s^n_i, s^n_{i+1}]$. This provides for all $s \in (s^n_i, s^n_{i+1}]$ that
 \beaa
 \big(u^{n,\l^n_i,\k} -v^{n,\l^n_i,\k}\big)(s,0) 
 &\le & 
 \lim_{\e\to 0} \big\{ u^{n,\l^n_i,\k}(\hat s_{\k,\e},\hat x_{\k,\e}) -v^{n,\l^n_i,\k}(\hat s_{\k,\e},\hat x'_{\k,\e})\big\} 
 \\
 &\le & 
 \big( u^{n,\l^n_i,\k} -v^{n,\l^n_i,\k} \big) (s^n_{i+1},\hat x_\k) 
 \\
 &\le & 
 \sup_{|x|\le C\sqrt{n}} \big(u^{n,\l^n_i,\k} -v^{n,\l^n_i,\k}\big) (s^n_{i+1},x)
 \\
 &\le & 
 e^{2L_0 s^n_{i+1}}\sup_{|x|\le C\sqrt{n}} \big(u^{n,\l^n_i} -v^{n,\l^n_i}\big)(s^n_{i+1},x)
 \eeaa
We next send $\k\searrow 0$ and then $s \searrow s^n_i$. By the semicontinuity properties of $u^{n,\l^n_i}$ and $v^{n,\l^n_i}$ at $s^n_i$ stated in Corollaries \ref{cor:PDEsub} (iii) and \ref{cor:PDEsuper} (iii), we obtain
 \beaa
 (u^{n,\l^n_i}-v^{n,\l^n_i})(s^n_i,0) 
 &\le& 
 e^{\frac{2L_0 T}{n^{1+a}}}\sup_{|x|\le C\sqrt{n}}\big(u^{n,\l^n_i} -v^{n,\l^n_i}\big)(s^n_{i+1},x).
 \eeaa
\no {\it Case 2.}\q Otherwise, there is a sub-sequence still denoted by $\{\hat s_\k\}_\k$ such that $\hat s_\k < s^n_{i+1}$ for each $\k$. Then, by the Crandall-Ishii Lemma in the parabolic case (see for example Theorem 12.2 on page 38 in \cite{Crandall}), there are $\a,X,Y$ such that
 \beaa
 &
 \big(\a,\e^{-1}(\hat x_{\k,\e} -\hat x'_{\k,\e}) ,X \big)\in \ul J u^{n,\l^n_i, \k}(\hat s_{\k,\e}, \hat x_{\k,\e}),
 \q 
 \big(\a,\e^{-1}(\hat x_{\k,\e} -\hat x'_{\k,\e}),Y \big)\in \ol J v^{n,\l^n_i, \k}(\hat s_{\k,\e}, \hat x'_{\k,\e}),
 &
 \eeaa
and $ X\le Y$. By the viscosity properties of $u^{n,\l^n_i, \k}$ and $v^{n,\l^n_i, \k}$ of Corollaries \ref{cor:PDEsub} a,d \ref{cor:PDEsuper}, respectively, this implies that
 \beaa
 &- R^{u,n}(\hat x_{\k,\e})
   - \bar G\Big(\hat s_{\k,\e}, 
                       \eta^{\l^n_i}( \hat x_{\k,\e}),
                       u^{n,\l^n_i,\k} ,
                       \e^{-1}(\hat x_{\k,\e} -\hat x'_{\k,\e})+\frac{\hat x^{\k,\e}}{n}, 
                       X+\frac{1}{n}\dbI_d\Big) 
  &
  \\
  & \q\q\q\q\q\q   \le ~ 0~\le ~   
      R^{v,n}(\hat x'_{\k,\e})
      - \bar G\Big(\hat s_{\k,\e}, 
                          \eta^{\l^n_i}(\hat x'_{\k,\e}), 
                          v^{n,\l^n_i,\k} , 
                          \e^{-1}(\hat x_{\k,\e} -\hat x'_{\k,\e}) -\frac{\hat x'_{\k,\e}}{n}, 
                          Y - \frac{1}{n}\dbI_d \Big). &
 \eeaa
By \eqref{barG}, we obtain
 \beaa
 & & 
 L_0\big(u^{n,\l_i,\k}(\hat s_{\k,\e}, \hat x_{\k,\e}) - v^{n,\l_i,\k} (\hat s_{\k,\e}, \hat x'_{\k,\e}) \big)
 \\
 & \le &  
 L_0\big(2n^{-1} + n^{-1} | \hat x_{\k,\e}+\hat x'_{\k,\e}| \big) 
 +\rho^G\big(C|\hat x_{\k,\e} - \hat x'_{\k,\e}|\big)
 +R^{u,n}(\hat x_{\k,\e}) 
 +R^{v,n}(\hat x'_{\k,\e})
 \\
 & \le & 
 2 Cn^{-\frac{a}{p+1}}+ 2L_0\big(n^{-1} + C n^{-\frac12} \big)
 +\rho^G\big(C|\hat x_{\k,\e} - \hat x'_{\k,\e}|\big) 
 + \ol\rho \Big(n^{-\frac{1}{p+1}} + i \,n^{-\frac{3p+3}{2p}}\big|{\rm x}^n_i\big|_p \Big),
 \eeaa
where $\ol \rho (\cd) := (2\rho^G +L_0\rho^u +L_0\rho^v)(C\cd)$ and ${\rm x}^n_i:=({\rm x}^n_{i-1},x)$.
Hence for any $s \in (s^n_i, s^n_{i+1}]$ we have
 \beaa
 \big( u^{n,\l^n_i,\k}- v^{n,\l^n_i,\k}\big) (s, 0)
 & \le & 
 \limsup_{\e\to 0}
 \big(u^{n,\l_i,\k}(\hat s_{\k,\e}, \hat x_{\k,\e}) 
        - v^{n,\l_i,\k} (\hat s_{\k,\e}, \hat x'_{\k,\e})\big)
 \\
 & \le &  
 Cn^{-\frac{a}{p+1}} + \ol\rho \Big(n^{-\frac{1}{p+1}} + i\, n^{-\frac{3p+3}{2p}}\big|{\rm x}^n_i\big|_p \Big)
 \\
 & \le & 
 Cn^{-\frac{a}{p+1}} + \ol\rho \Big(n^{-\frac{1}{p+1}} + i\, n^{-\frac{3p+3}{2p}}\big(|{\rm x}^n_{i-1}|_p +C\sqrt n \big) \Big).
 \eeaa
%%%%%%%%%%% we are here!!! %%%%%%%%%%%%%%%%
We next let $\k\searrow 0$ and then let $s \searrow s^n_i$. By the semicontinuity properties of $u^{n,\l^n_i}$ and $v^{n,\l^n_i}$ stated in Corollaries \ref{cor:PDEsub} (iii) and \ref{cor:PDEsuper} (iii), we obtain
 \beaa
 \big( u^{n,\l^n_i}- v^{n,\l^n_i}\big) (s^n_i,0) 
 & \le & 
 Cn^{-\frac{a}{p+1}}+ \ol\rho \Big(n^{-\frac{1}{p+1}} + i\,n^{-\frac{3p+3}{2p}} \big(|{\rm x}^n_{i-1}|_p +C\sqrt n \big)\Big).
 \eeaa
{\bf 3.}\quad By the results of Cases 1 and 2 in the previous Step 2, we conclude that
\beaa
(u^{n,\l^n_i}-v^{n,\l^n_i})(s^n_i,0) &\le & \max \Big\{
 e^{\frac{2L_0T}{n^{1+a}}}\sup_{|x| \le C\sqrt{n}}\big(u^{n,\l^n_i} -v^{n,\l^n_i}\big)(s^n_{i+1},x),\\
 && \q\q\q Cn^{-\frac{a}{p+1}}+ \ol\rho \Big(n^{-\frac{1}{p+1}} + i \big(|{\rm x}^n_{i-1}|_p +C\sqrt n \big)  n^{-\frac{3p+3}{2p}}\Big) 
 \Big\}.
\eeaa
We next use Corollaries \ref{cor:PDEsub} (ii) and \ref{cor:PDEsuper} (ii) so that by direct iteration, it follows that:
\beaa
(u^{n}-v^{n})({\bf 0}) 
&\le & 
e^{2L_0T} 
\max \Big\{ \sup_{i \le n^{1+a},|{\rm x}_i |_p \le Cn^{\hat a}} \big( u^{n} - v^n\big) \big(T,\eta^{\l_i}(x_i)\big) , \\
&& \q\q\q\q\q\q\q  Cn^{-\frac{a}{p+1}} +  \ol\rho \Big( n^{-\frac{1}{p+1}} + C n^{1+a} n^{\hat a} n^{-\frac{3p+3}{2p}}\Big)\Big\},\\
&\le & e^{2L_0T} \max \Big\{ \sup_{i \le n^{1+a},|{\rm x}_i |_p \le Cn^{\hat a}} \big( u^{n} - v^n\big) \big(T,\eta^{\l_i}(x_i)\big), \\
&& \q\q\q\q\q\q\q\q\q Cn^{-\frac{a}{p+1}} +  \ol\rho \Big( n^{-\frac{1}{p+1}} + C n^{-\frac{1}{2p} +a +\frac{a}{p}} \Big)\Big\},
\eeaa
where $\hat a := \frac{1+a + p/2}{p} =  \frac12 +\frac{a+1}{p} \le \frac12 + \frac{6}{5p}$. Since $u(T,\cd) \le v(T,\cd)$, we have
\beaa
\big( u^{n} - v^n \big)(T,\eta)
&\le & \big( u^{n}- u\big)(T,\eta) + \big( u- v\big)(T,\eta) -\big( v^{n}- v\big)(T,\eta)\\
&\le & \big| \big(u^{n}- u\big)(T,\eta) \big| + \big| \big(v^{n} - v\big) (T,\eta) \big|
\eeaa
Recall the notation $\rho_n$ introduced in Lemma \ref{lem:ppt-rgl}. Since $a<\frac{1}{5p}$, we have
\beaa
(u^{n}-v^{n})({\bf 0})  ~\le  ~ e^{2L_0 T} \max\Big\{2 \rho_n, Cn^{-\frac{a}{p+1}} +  \ol\rho \big(n^{-\frac{1}{p+1}} +C n^{-\frac{1}{10p}}\big)\Big\}.
\eeaa
By Lemma \ref{lem:ppt-rgl}, the regularizations $u^n$ and $v^n$ converge to $u$ and $v$, respectively. Then, by sending $n\to\infty$, we obtain the required result
 \beaa
 (u-v)({\bf 0}) 
 &\le& 
 0.
 \eeaa
\qed

\section{Appendix}\label{sec:appendix}

In this section we provide sufficient conditions for the value function of a stochastic control problem to be $d_p-$uniformly continuous.

%{\color{red}(At below change $(t,\o)$ to $\th$? Also, do we want to work on $d_p$ instead of $d_1$? Or mention that we just do $d_1$ but the $d_p$ case is similar?) }

\begin{eg}\label{eg:L1continuous}
Let $X$ be a controlled diffusion $dX^\a_s = \si (s,X^\a,\a_s) dW_s,$
where $W$ is a Brownian motion, where the function $\si: (\th,\a)\mapsto \si(\th,\a)$ is bounded and $d_p-$Lipschitz continuous in $\th$. Denote the shifted process:
 \beaa
 dX^{\a,\th}_s 
 &=& 
 \si^{\th} (s,X^{\a,\th},\a_s) dW_s.
 \eeaa
We consider the stochastic control problem:
 \beaa
 u_0 
 &:=& 
 \sup_{\|\a\|_\infty \le 1} \dbE\big[g(X^{\a}_{T\we\cd})\big],
 \eeaa
where the function $g$ is uniformly continuous in $L^p$-norm, i.e.  $|g(\o)-g(\o')|\le \rho\big( \| \o-\o' \|_p \big)$ (without loss of generality we may assume that $\rho$ is concave). Introduce the dynamic version:
 \beaa
 u(\th) 
 &:=& 
 \sup_{\|\a\|_\infty \le 1} \dbE\Big[g^{\th}\big(X^{\a,\th}_{(T-t)\we\cd}\big)\Big].
 \eeaa
Our main objective in this section is to prove that
 \be\label{ud1unifcont}
 \mbox{the function $u$ is $d_p-$uniformly continuous}. 
 \ee
To see this, we first estimate that
 \bea
 |u(t,\o) - u(t,\o')|  
 &\le& 
 \sup_\a \dbE \big| g^{t,\o}(X^{\a,t,\o}_{(T-t)\we\cd})-g^{t,\o'}(X^{\a,t,\o'}_{(T-t)\we\cd})\big|
 \nonumber\\
 &\le& 
 \sup_\a \dbE\Big[\rho\Big(\|\o_{t\we\cd}-\o'_{t\we\cd}\|_p 
                             + \|X^{\a,t,\o}_{(T-t)\we\cd}-X^{\a,t,\o'}_{(T-t)\we\cd}\|_p \Big)\Big]
 \nonumber\\
 &\le& 
 \sup_\a \rho\Big( d_p\big((t,\o),(t,\o')\big) + \dbE \|X^{\a,t,\o}_{(T-t)\we\cd} - X^{\a,t,\o'}_{(T-t)\we\cd}\|_p \Big),
 \label{step1-d1cont}
 \eea
where we applied Jensen's inequality in the last step. We next focus on the estimation of
 \beaa
 && \dbE \|X^{\a,t,\o}_{s\we (T-t)\we\cd}-X^{\a,t,\o'}_{s\we(T-t)\we\cd}\|_p^{2p}\\
 &\le& 
 C \int_0^{T+1}  \dbE\Big| \int_0^{s\we (T-t)\we r}\big(\si^{t,\o} (\l,X^{\a,t,\o},\a_\l)
                                                                   - \si^{t,\o'} (\l,X^{\a,t,\o'},\a_\l) 
                                                            \big)dW_\l \Big|^{2p} dr 
 \\
 &\le& 
 C \int_0^{T+1}  \dbE \int_0^{s\we (T-t) \we r}\big| \si^{t,\o} (\l,X^{\a,t,\o},\a_\l)
                                                             - \si^{t,\o'} (\l,X^{\a,t,\o'},\a_\l) \big|^{2p} d\l  dr 
 \\
 &\le& 
 (2C_{\rm lip})^{2p}C(T+1) \int_0^s \Big(d_p\big((t,\o),(t,\o')\big)^{2p} + \dbE \|X^{\a,t,\o}_{\l\we (T-t)\we\cd}-X^{\a,t,\o'}_{\l\we (T-t)\we \cd} \|_p^{2p} \Big) d\l,
 \eeaa
 where $C_{\rm lip}$ is the Lipschitz constant of $\si$.
By the Gronwall inequality, this provides
 \beaa
 \dbE \|X^{\a,t,\o}_{(T-t)\we\cd}-X^{\a,t,\o'}_{(T-t)\we\cd}\|_p^{2p} 
 &\le& 
 \tilde C d_p\big((t,\o),(t,\o')\big)^{2p}, \q\mbox{with}~~  \tilde C  = (2C_{\rm lip})^{2p}C(T+1) e^{(2C_{\rm lip})^{2p}C(T+1)T}.
 \eeaa
Plugging the last inequality into \eqref{step1-d1cont}, we get
 \bea\label{fix_t}
 |u(t,\o) - u(t,\o')| 
 &\le& 
 \rho\Big(\big(1+\tilde C^{\frac{1}{2p}}\big) d_p\big((t,\o),(t,\o')\big)\Big).
 \eea

We next estimate $|u(t,\o) - u(t',\o)|$ for $t < t'$. By the dynamic programming, we have
 \beaa
 |u(t,\o) - u(t',\o) | 
 &=& 
 \big|\sup_\a \dbE[u^{t,\o}(t'-t, X^{\a,t,\o})] -u(t',\o) \big| 
 \\
 &\le&  
 \sup_\a \dbE \big| u^{t,\o}(t'-t, X^{\a,t,\o})-  u(t',\o) \big|
 \\
 &\le & 
 \sup_\a \dbE \Big[ \rho\Big(\big(1+\tilde C^{\frac{1}{2p}}\big) d_p\big((t',\o\otimes_{t} X^{\a,t,\o}),(t',\o)\big)\Big)\Big] 
 \\
 &\le & 
 \sup_\a \rho\Big( \big(1+\tilde C^{\frac{1}{2p}}\big) \dbE \Big[ d_p\big((t',\o\otimes_{t} X^{\a,t,\o}),(t',\o)\big)\Big] \Big)\\
 &\le &
  \sup_\a \rho\Big( \big(1+\tilde C^{\frac{1}{2p}}\big) \Big(\int_t^{T+1} \dbE \big|X^{\a,t,\o}_{(s-t)\we (t'-t)} \big|^p ds \Big)^{\frac{1}{p}}\Big] \Big),
 \eeaa
where we applied the result of \eqref{fix_t} in the third inequality. Finally using the classical estimate
\beaa
\sup_{\a,t,\o,r}\dbE \big|X^{\a,t,\o}_{r\we (t'-t)}\big|^p &\le & \hat C (t'-t)^{\frac{p}{2}}
\eeaa
(because $\si$ is bounded), we see that
 \bea\label{fix_omega}
 |u(t,\o) - u(t',\o) | 
 &\le& 
 \rho\Big(\big(1+\tilde C^{\frac{1}{2p}}\big) \big((T+1)\hat C\big)^{\frac{1}{p}} (t'-t)^\frac12\Big).
 \eea
The required result \eqref{ud1unifcont} is now a direct consequence of \eqref{fix_t} and \eqref{fix_omega}.
\end{eg}


\begin{thebibliography}{1}

\bibitem{BarlesSouganidis}
G. Barles and P. E. Souganidis, {\it Convergence of approximation schemes for fully nonlinear second order equations}. Asymptotic Analysis 4:271-283, 1991.

\bibitem{CaffarelliCabre}
L. A. Caffarelli and X. Cabre, {\it Fully nonlinear elliptic equations}. American Mathematical Society Colloquium Publications, 43. American Mathematical Society, Providence, RI, 1995.

\bibitem{CF}
R. Cont  and D. Fournie,  {\it Functional It\^o calculus and stochastic integral representation of martingales}, {\sl Annals of Probability}, 41 (2013), 109-133.
  
\bibitem{Crandall} M. Bardi, M. G. Crandall, L. C. Evans, H. M. Sonner, and P. E. Souganidis ,{\it Viscosity Solutions and Applications}, Lecture Notes in Mathematics 1660, Springer, 1995. 

\bibitem{CrandallLions} M. G. Crandall, P.-L. Lions,
{\it Viscosity solutions of Hamilton-Jacobi equations}. Trans. Amer. Math. Soc. 277 (1983), no. 1, 1-42.

\bibitem{CossoFedericoGozziRosestolatoTouzi}
A. Cosso, F. Gozzi, M. Rosestolato, S. Federico, and N. Touzi, {\it Path-dependent equations and viscosity solutions in infinite dimension}. Preprint,  arXiv:1502.05648.

\bibitem{Dupire} B. Dupire,  {\it Functional It\^{o} calculus}, papers.ssrn.com, (2009).

\bibitem{EKTZ} I. Ekren, C. Keller,  N. Touzi and J. Zhang,
{\it On viscosity solutions of path dependent PDEs}. {\sl Ann. Probab.} 42 (2014), no. 1, 204-236.

\bibitem{ETZ-os} I. Ekren,  N. Touzi and J. Zhang, {\it Optimal Stopping under Nonlinear Expectation}, {\sl Stochastic Processes and Their Applications}, 124 (2014), 3277-3311.

\bibitem{ETZ-part1} I. Ekren,  N. Touzi and J. Zhang,
{\it Viscosity Solutions of Fully Nonlinear Parabolic Path Dependent PDEs: Part I}, Ann. Probab., to appear.

\bibitem{ETZ-part2} I. Ekren,  N. Touzi and J. Zhang,
{\it Viscosity Solutions of Fully Nonlinear Parabolic Path Dependent PDEs: Part II}, Ann. Probab., to appear.

\bibitem{JLS} R. Jensen, P.-L. Lions, and P. E. Souganidis,
{\it A uniqueness result for viscosity solutions of second order fully nonlinear partial differential equations}, Proceedings of the American mathematical society, Vol. 102, No. 4, April 1988.

\bibitem{LL} J. M. Lasry and P.-L. Lions, 
{\it A remark on regularization of Hilbert spaces,} Israel J. Math., Vol. 55, No. 33, 1986.

\bibitem{Lukoyanov}
N. Lukoyanov, {\it On viscosity solution of functional Hamilton-Jacobi type equations for hereditary systems}, Proceedings of the Steklov Institute of Mathematics, Suppl.
2, 190-200, 2007.

\bibitem{Peng-ICM}
S. Peng,  {\it Backward stochastic differential equation, nonlinear expectation and their applications}, {\sl Proceedings of the International Congress of Mathematicians}, Hyderabad, India, (2010).

\bibitem{PW}
Peng, S. and Wang, F. (2011) {\it BSDE, Path-dependent PDE and Nonlinear Feynman-Kac Formula},  arXiv:1108.4317.


\bibitem{PhamZhang}
T. Pham and J. Zhang, {\it Two Person Zero-sum Game in Weak Formulation and Path Dependent Bellman-Isaacs Equation}, SIAM Journal of Control and Optimization, 52 (2014), 2090-2121.

\bibitem{Ren-perron} Z. Ren, 
{\it Perron's method for viscosity solutions of semilinear path dependent PDEs}, preprint, arXiv:1503.02169.

\bibitem{RTZ-survey} Z. Ren, N. Touzi and J. Zhang, {\it An overview on Viscosity Solutions of Path-Dependent PDEs}, {\sl Stochastic Analysis and Applications 2014 - In Honour of Terry Lyons}, Springer Proceedings in Mathematics and Statistics, Editors: Dan Crisan, Ben Hambly, Thaleia Zariphopoulou, Springer (2014), 397-453.

\bibitem{RenTan}
Z. Ren and X. Tan, {\it On the convergence of monotone schemes for path-dependent PDE}. Preprint,  arXiv:1504.01872.

\bibitem{RTZ} Z. Ren, N. Touzi and J. Zhang, {\it Comparison of viscosity solutions of semilinear path-dependent partial differential equations}. Preprint,  arXiv:1410.7281.

\bibitem{ZhangZhuo}
J. Zhang and J. Zhuo, {\it Monotone Schemes for Fully Nonlinear Parabolic Path Dependent PDEs}, Journal of Financial Engineering, 1 (2014) 1450005 (23 pages); DOI: 10.1142/S2345768614500056.
\end{thebibliography}
\end{document}